\definecolor{red}{rgb}{1,0,0}
\definecolor{blue}{rgb}{0,0,1}
\definecolor{green}{rgb}{0,1,0}
\newtheorem{theorem}{Theorem}[section]
\newtheorem{lemma}[theorem]{Lemma}
\newtheorem{corollary}[theorem]{Corollary}
\newtheorem{proposition}[theorem]{Proposition}
\newtheorem{question}[theorem]{Question}
\theoremstyle{definition}
\newtheorem{definition}[theorem]{Definition}
\newtheorem{remark}[theorem]{Remark}
\newtheorem{example}[theorem]{Example}
\def\ws{\widetilde{\Sigma}}
\def\wpts{\widetilde{\mathbf{p}}}
\newtheorem*{liftingthm1}{Theorem~\ref{thm:premain}}
\newtheorem*{liftingthm2}{Theorem~\ref{thm:main}}
\newtheorem*{homomthm}{Theorem~\ref{thm:homom2}}
\newtheorem*{coverthm}{Theorem~\ref{thm:cover}}
\title{Liftable braids and the coloured braid groupoid}
\author[Joan Licata]{Joan Licata}
\address{Mathematical Sciences Institute, Australian National University  \& International Research Lab, France-Australia Mathematical Sciences and Interactions}
\email{joan.licata@anu.edu.au}
\author[Vera V\'ertesi]{Vera V\'ertesi}
\address{University of Vienna}
\email{vera.vertesi@univie.ac.at}
\begin{document}

\maketitle

\begin{abstract} When $\pi:\ws\rightarrow D^2$ is a cover of the disc branched over $n$ marked points, the braid group $B_n$ acts on the disc by homeomorphisms fixing the marked points setwise. A  braid $\beta$ \textit{lifts} if there is a homeomorphism $\widetilde{\beta}\in \textit{Mod}(\ws)$ such that $\beta\circ \pi=\pi\circ \widetilde{\beta}$.  For arbitrary covers, the  \textit{lifting homomorphism} taking $\beta$ to $\widetilde{\beta}$ is only defined on a proper subgroup  of the braid group.  This paper extends the lifting homomorphism to a map from a coloured braid groupoid to a mapping class groupoid for all simple covers of the disc.  We characterise the lift of every coloured braid, recovering the classical lifting homomorphism on the liftable braid group.  \end{abstract}

\section{Introduction}

This paper focuses on a specific case of the following general question: when one surface covers another, how are their mapping class groups related?  Birman and Hilden famously took this perspective in their discovery of a presentation for the mapping class group of the genus two surface, and further exploration has exposed deep and beautiful connections between algebraic and topological structures \cite{BH1}.  Like these, we consider branched covers of the disc and identify the mapping class group of the marked disc with the braid group.    

This relationship is well understood for double branched covers: every braid acting on the branch values in the disc lifts to a homeomorphism of the double branched cover of the disc.  In more general covers, however, the set of \textit{liftable braids} forms a proper subgroup of the braid group that acts on a marked disc.  We focus on $d$-fold \textit{simple covers}; these are the branched covers with minimal branching, in the sense that no component of the branch locus has branching degree greater than $2$ (Definition~\ref{def:simple}).

\begin{figure}[h]
\begin{center}
\includegraphics[scale=.8]{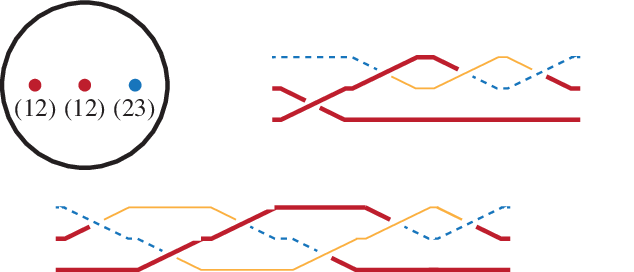}
\caption{ The labels on the marked disc determine a $3$-fold simple branched cover.  The (equivalent) coloured braids act on this marked and labled disc.  Each distinct strand colour is identified with a distinct transposition $(12), (23)$, or $(13)$.   } 
\label{fig:ex1}
\end{center}
\end{figure}

A $d$-fold simple branched cover of the disc is determined by labeling each branch value by a transposition in the symmetric group $S_d$, so the natural mapping classes that act on the marked disc are braids whose strands are coloured by transpositions.  As strands cross in a diagram representing the braid, the label on the overstrand is preserved, while the label on the understrand is conjugated by the overstrand label.  See Figure~\ref{fig:ex1} for an example of a marked and labeled disc and two coloured braids that act on it.

A coloured braid lifts to a mapping class of the branched cover if and only if the initial and terminal strand labels agree.   Figure~\ref{fig:ex1} shows two diagrams for a liftable coloured braid. This lifting criterion is clear, but it is an existence statement rather than a constructive one; this paper addresses how to identify the lift of an arbitrary braid.

Given a branched covering of the marked disc $\pi:\ws\rightarrow  D^2$, we decorate $\ws$ with a system of indexed arcs and assign arcslides and index swaps to each coloured braid generator. When a liftable coloured braid $\beta=\sigma_{i_m}\dots \sigma_{i_1}$ acts on the branch values in the base, we apply the corresponding operations to the decorations on $\ws$.  

\begin{liftingthm1} There is a unique mapping class $\Phi(\beta)$ that takes the image of the decorated $\ws$ under the action of $\beta$  to $\ws$ with the original decorations.  This mapping class $\Phi(\beta)$ is the lift of $\beta$.
\end{liftingthm1}

\begin{figure}[h]
\begin{center}
\includegraphics[scale=.7]{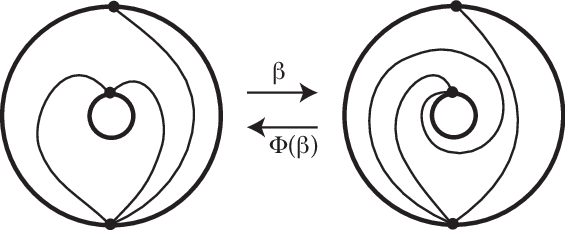}
\caption{ The braid $\beta$ shown in Figure~\ref{fig:ex1} induces arcslides that transform the decorations on the left annulus to the decorations on the right annulus.  The mapping class $\Phi(\beta)$ taking the right-hand annulus to the left-hand annulus is the lift of $\beta$.} 
\label{fig:introex}
\end{center}
\end{figure}

Figure~\ref{fig:introex} shows a pair of decorated annuli that cover the disc from Figure~\ref{fig:ex1}.  The operations assigned to the braid $\beta$ in Figure~\ref{fig:ex1} transform the decorations on the left to the decorations on the right.   It is straightforward to see that the annulus on the right maps to the annulus on the left via a positive Dehn twist; this is the lift of $\beta$.

The algorithm producing the  two decorated surfaces is combinatorial and works equally well for any factorisation of the original braid $\beta$.  This is an improvement over existing lifting results, which depended factoring $\beta$ as a product of generators of the liftable braid group.  To achieve this, we replace the liftable subgroup of the coloured braid group $B_n$ by the coloured braid groupoid $\text{ColB}_{n,d}$; this allows us to study coloured braids whose initial and terminal labels differ.  We then  generalise the map that takes liftable braids to mapping classes and instead lift the entire coloured braid groupoid to a mapping class groupoid.

In fact, we associate a pair of groupoids to each equivalence class of branched covers of the disc. The first is a \textit{mapping class groupoid} $\mathcal{M}$ whose objects are branched covers of the disc and whose morphisms are homeomorphisms between them.   We also define a \textit{graphical groupoid} $\mathcal{G}$ whose objects are branched covers decorated with the systems of curves  noted above; see Definition~\ref{def:graphobj} for the precise definition.  Morphisms in $\mathcal{G}$ are explicit combinatorial operations on the curves, and Definition~\ref{def:homom} assigns a morphism in $\mathcal{G}$ to each coloured braid.  

Applications of mapping class groups often rely on characterising a mapping class by its action on some auxiliary curves, and in this spirit, pairs of objects in $\mathcal{G}$  give rise to morphisms in $\mathcal{M}$.  When a liftable braid $\beta$ acts on an object $O\in \mathcal{G}$, we have already introduced the notation $\Phi(\beta)$ for the homeomorphism mapping $\beta\cdot O$ to $O$.  More generally, Proposition~\ref{prop:label} extends this to a function $\Phi:\text{ColB}_{n,d}\rightarrow \mathcal{M}$.

\begin{homomthm} The map $\Phi:\text{ColB}_{n,d}\rightarrow \mathcal{M}$ is a groupoid homomorphism. 
\end{homomthm}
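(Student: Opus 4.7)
The plan is to verify the three defining properties of a groupoid homomorphism for $\Phi$: compatibility with source and target, preservation of identities, and preservation of composition. Source/target compatibility is immediate from the construction, since a coloured braid $\beta$ with initial colouring $c$ and terminal colouring $c'$ produces a mapping class $\Phi(\beta)$ whose domain and codomain are precisely the branched covers associated to $c'$ and $c$. For identities, the trivial coloured braid induces no arcslides or index swaps, so it acts trivially on any decorated cover, and the uniqueness clause of Theorem~\ref{thm:premain} forces $\Phi(\mathit{id})$ to equal the identity mapping class.

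The substance of the theorem lies in preservation of composition. Fix composable coloured braids $\beta_1,\beta_2 \in \text{ColB}_{n,d}$, and choose a graphical object $O \in \mathcal{G}$ whose underlying cover is the source of $\beta_1$. Set $O_1 := \beta_1 \cdot O$ and $O_2 := \beta_2 \cdot O_1$. Since the graphical action is well-defined on coloured braids, $O_2 = (\beta_2\beta_1)\cdot O$ as objects of $\mathcal{G}$. By Theorem~\ref{thm:premain}, $\Phi(\beta_1)$ is the unique mapping class $O_1 \to O$ carrying the decoration of $O_1$ onto that of $O$, and analogously $\Phi(\beta_2)$ is the unique mapping class $O_2 \to O_1$ while $\Phi(\beta_2\beta_1)$ is the unique mapping class $O_2 \to O$. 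The composite $\Phi(\beta_1)\circ \Phi(\beta_2)\colon O_2 \to O$ again carries decoration to decoration, so uniqueness forces $\Phi(\beta_2\beta_1) = \Phi(\beta_1)\circ \Phi(\beta_2)$ (in whichever order matches the groupoids' composition convention).

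The main obstacle I anticipate is establishing that the graphical action descends to a genuine action of the groupoid $\text{ColB}_{n,d}$ on $\mathcal{G}$, rather than merely to an action of the free monoid on the generators: distinct factorisations of a coloured braid $\beta$ must produce the same decorated object $\beta\cdot O$. This amounts to checking that the arcslides and index swaps assigned to generators respect all the coloured braid groupoid relations (far commutativity and the three-strand braid relations in each colour regime). This coherence should be a direct consequence of, or a close parallel to, the independence-of-factorisation clause of Theorem~\ref{thm:premain}; once it is in hand, the composition axiom reduces to the clean application of uniqueness sketched above.
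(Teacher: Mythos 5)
Your overall strategy --- reduce to generators using independence of factorisation, dispose of identities via the graphical relations, and then invoke uniqueness of the decoration-matching homeomorphism --- has the same skeleton as the paper's proof, and the first two points are handled correctly (the paper likewise leans on Proposition~\ref{prop:braidrel} for both). The gap is in the composition step, where the sentence ``the composite again carries decoration to decoration, so uniqueness forces the result'' conceals the entire content of the theorem. First, a bookkeeping issue: $\Phi(\beta_2)$ is not a map $O_2\to O_1$ on a fixed surface. By Definition~\ref{def:Phi} it is the homeomorphism from $\ws_{\beta_1\cdot\tau}$ to $\ws_{\beta_2\beta_1\cdot\tau}$ taking $\phi(\beta_2)\cdot O_{\beta_1\cdot\tau}$ to $O_{\beta_2\beta_1\cdot\tau}$, whereas your $O_2=\phi(\beta_2)\cdot\phi(\beta_1)\cdot O$ lives on the original surface $\ws_\tau$. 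So before $\Phi(\beta_2)$ can be applied you must transport $O_2$ across $\Phi(\beta_1)$, and the assertion you actually need is
\[
\Phi(\beta_1)\bigl(\phi(\beta_2)\cdot\phi(\beta_1)\cdot O_\tau\bigr)=\phi(\beta_2)\cdot O_{\beta_1\cdot\tau},
\]
that is, that $\Phi(\beta_1)$ intertwines the subsequent arcslides and index swaps. Once this equivariance is in hand, $\Phi(\beta_2)\circ\Phi(\beta_1)$ carries $\phi(\beta_2\beta_1)\cdot O_\tau$ to $O_{\beta_2\beta_1\cdot\tau}$ and Lemma~\ref{lem:lab} finishes the argument exactly as you say --- but the equivariance is not a formal consequence of uniqueness; it is the substantive claim, and nothing in your write-up addresses it.

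The paper proves precisely this point by an Alexander-method argument: it introduces the lifted cut arcs $\widetilde{\Delta}$ and checks that $\Phi(\sigma_{i_2}\sigma_{i_1})$ and $\Phi(\sigma_{i_2})\circ\Phi(\sigma_{i_1})$ produce images of $\widetilde{\Delta}$ with the same intersection pattern against $\widetilde{\Gamma}_{\sigma_{i_2}\sigma_{i_1}\cdot\tau}$, because the same sequence of arcslides and index swaps governs both sides and the intermediate re-identification of $\phi(\sigma_{i_1})\cdot\widetilde{\Gamma}_\tau$ with $\widetilde{\Gamma}_{\sigma_{i_1}\cdot\tau}$ does not disturb that pattern. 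Some argument of this kind --- tracking a curve system rich enough to pin down the mapping class --- needs to be supplied; with it, your proof becomes essentially the paper's.
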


This homomorphism lifts the entire coloured braid groupoid:
\begin{liftingthm2}  

Suppose that $\pi_1:\ws_1\rightarrow D^2_1$ and $\pi_2:\ws_2\rightarrow D^2_2$ are branched covers of the marked and labeled disc with $\beta: D^2_1\rightarrow D^2_2$ a coloured braid.  Then \[\pi_2\circ\Phi(\beta)=\beta\circ \pi_1.\] \end{liftingthm2}

In the case of a double branched cover, each elementary braid generator lifts to a Dehn twist in the branched cover, and one may consider various ways to generalise this important example.  Wajnryb coined the term ``geometric lift'' for any homomorphism with this property and asked if there are non-geometric maps from braid groups to mapping class groups \cite{Waj1}. Szepietowksi established the existence of non-geometric lifts, with other constructions soon following \cite{Szep} \cite{GM} \cite{BodTill}.  Our generalisation from group to groupoid has a different flavour, but $\Phi$ is nevertheless geometric in the sense that it takes each liftable generator $\sigma_i$ to a Dehn twist, while still meaningfully interpreting the lifts of other generators.  

Composition in a groupoid is more subtle than in a group, but the lifting problem is simpler, as the complexity of lifting an arbitrary braid reduces to the complexity of composing lifts of simple generators. There exist arbitrarily long concatenations of coloured braid generators which cannot be truncated to form a liftable braid, so it is impractical to study lifts of braids to mapping classes via fixed braid diagrams.  In contrast, any braid diagram decomposes as a concatenation of coloured braids that are liftable to the mapping class groupoid. 

Lifting the coloured braid groupoid to a mapping class groupoid offers a further computational advantage.  Existing work, culminating in a result of Wajnryb-Wi\'sniowska-Wajryb, establishes a finite generating set for the liftable braid group and articulates the lift of each \cite{MP}, \cite{WWW} \cite{Apol}.  However, there is no algorithm for expressing a liftable braid in terms of these generators, making lifts difficult to compute in practice.  For example, the top braid diagram in Figure~\ref{fig:ex1} factors as a product of two liftable braids: (1) a crossing between two strands with the same label and (2) three half-twists between strands with different labels.  In contrast, the equivalent braid diagram shown on below does not decompose as a concatenation of diagrams for liftable braids.  Nevertheless, Example~\ref{ex:ex2} computes the lift of this braid from the factorisation given by the lower diagram, as each generator is liftable with respect to the mapping class groupoid.

In the final section we construct a pair of $2$-dimensional CW-complexes $X_\mathcal{G}$ and $X_\mathcal{M}$ from the groupoids $\mathcal{G}$ and $\mathcal{M}$, respectively.

\begin{coverthm} For $\ws$ the branched cover of the marked disc labeled by $\tau_0$, we have \[ \pi_1(X_\mathcal{M}, \tau_0)\cong\text{Mod}(\ws).\]  Furthermore, the CW-complex $X_\mathcal{G}$ is the universal cover of $X_\mathcal{M}$.  
\end{coverthm}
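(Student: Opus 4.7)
The plan is to first establish the fundamental group computation for $X_\mathcal{M}$, then construct $X_\mathcal{G}$ as a covering of $X_\mathcal{M}$ via a forgetful functor $\mathcal{G}\to \mathcal{M}$, and finally verify that $X_\mathcal{G}$ is simply connected.

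First I would invoke the standard dictionary between presentations of groupoids and two-dimensional CW-complexes (see, e.g., Higgins, \emph{Notes on Categories and Groupoids}, or Brown, \emph{Topology and Groupoids}). For any groupoid $\mathcal{C}$ presented by generating morphisms and defining relations, the associated CW-complex (one 0-cell per object, one oriented 1-cell per generator, one 2-cell per relation) has fundamental group at a vertex $O$ equal to the vertex group $\text{Aut}_\mathcal{C}(O)$. Applied to $\mathcal{M}$, this yields $\pi_1(X_\mathcal{M},\tau_0)\cong \text{Aut}_\mathcal{M}(\tau_0) = \text{Mod}(\ws)$, the final equality being immediate from the definition of $\mathcal{M}$, whose self-morphisms of $\tau_0$ are precisely isotopy classes of self-homeomorphisms of the cover $\ws$ labelled by $\tau_0$. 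The one technical point to verify is that the chosen 2-cells in $X_\mathcal{M}$ really do generate all relations among the chosen generators; this should reduce to a known presentation of the mapping class group of a surface with boundary.

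For the universal cover claim, I would construct the projection $p:X_\mathcal{G}\to X_\mathcal{M}$ from a forgetful functor $F:\mathcal{G}\to \mathcal{M}$ that sends a decorated cover to its underlying cover and sends a sequence of combinatorial operations to the unique realising mapping class, which exists by Theorem~\ref{thm:premain}. Since the generating morphisms of $\mathcal{G}$ are designed to map to generators of $\mathcal{M}$, this functor induces a cellular map between the associated CW-complexes. To verify that $p$ is a covering map I would check the local homeomorphism property at each 0-cell: the star of a decorated object $O$ contains exactly one edge per combinatorial generator applicable to the decoration, each projecting to a distinct edge at $F(O)$, and the 2-cells of $X_\mathcal{G}$ are by construction the complete lifts of the 2-cells of $X_\mathcal{M}$, ensuring local homeomorphism on the 2-skeleton as well.

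The main obstacle is proving that $X_\mathcal{G}$ is simply connected, equivalently that $\text{Aut}_\mathcal{G}(O)=1$ for every object $O$. A loop at $O$ in $X_\mathcal{G}$ corresponds to a sequence of arcslides and index swaps whose net effect on the decoration is trivial, and I must show that any such sequence is a product of 2-cell boundaries. My approach is to analyse the fibre of $p$ over $\tau_0$: this fibre is naturally the set of admissible decorations of $\ws$, and the monodromy action of $\text{Mod}(\ws)=\pi_1(X_\mathcal{M},\tau_0)$ on the fibre is by push-forward of arcs and indices. I expect this monodromy action to be free, by an Alexander-method argument: the decoration comprises enough indexed curves and arcs to rigidify the surface, so any mapping class fixing every decorating curve and preserving indices must be isotopic to the identity. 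Given freeness of the monodromy, together with connectedness of $X_\mathcal{G}$ (which can be verified directly by exhibiting sequences of arcslides and index swaps between any two decorated covers), standard covering-space theory identifies $X_\mathcal{G}$ with the universal cover of $X_\mathcal{M}$.
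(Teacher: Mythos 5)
Your overall architecture matches the paper's: both arguments first identify $\pi_1(X_\mathcal{M},\tau_0)$ with the vertex group $\text{Mod}(\ws)$ by showing the complex presents the groupoid, and both obtain the map $X_\mathcal{G}\to X_\mathcal{M}$ from the labelling/forgetful map applied cell by cell (the paper calls it $\mathcal{L}$ and builds it from $L$ exactly as you describe). Where you genuinely diverge is the proof that $X_\mathcal{G}$ is simply connected. The paper proves this directly inside Proposition~\ref{prop:repr} by a rewriting argument: braid-relation $2$-cells homotope any loop to a product of liftable powers of twists around the Wajnryb--Wi\'sniowska-Wajnryb arcs, the Type~2 and Type~3 $2$-cells contract those factors, and the remaining word in lifted Dehn twists represents the identity mapping class (silently invoking Lemma~\ref{lem:lab}) and so is tiled by $2$-cells of the third family. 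You instead run covering-space theory: freeness of the monodromy action on the fibre over $\tau_0$, checked at the single point $O_{\tau_0}$ by the Alexander method (which is precisely Lemma~\ref{lem:lab}, via the relation $\Phi(\beta)(\phi(\beta)\cdot O_{\tau_0})=O_{\tau_0}$), forces $p_*\pi_1(X_\mathcal{G})=1$. Your route is cleaner and localises the rigidity input nicely, but it is not independent of the hard part: it consumes the isomorphism $\pi_1(X_\mathcal{M},\tau_0)\cong\text{Mod}(\ws)$ as an input, and that isomorphism is exactly the ``one technical point'' you defer. That deferred point is where the paper does essentially all of its work: it needs surjectivity of the lifting map for $d\geq 3$, the finite generating set of liftable twists, the computation that minimal liftable powers of Type~2 and Type~3 twists lift to the identity, and the choice of the third family of $2$-cells from a finite presentation of $\text{Mod}(\ws_{\tau_0})$ on the lifted twists --- so ``should reduce to a known presentation'' is correct but the reduction is the theorem. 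One further point that both you and the paper pass over quickly: for $p$ to be a covering on the $2$-skeleton, the boundary word of each $2$-cell of $X_\mathcal{M}$ must act trivially via $\phi$ on \emph{every} decoration in the fibre, not only on $O_{\tau_0}$; this holds because arcslides and index swaps commute with homeomorphisms of decorated surfaces, but it deserves a sentence in a complete write-up.
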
   

\subsection{Acknowledgements}
This material is based upon work supported by the National Science Foundation under Grant No. DMS-1439786 while the authors were in residence at the Institute for Computational and Experimental Research in Mathematics in Providence, RI, during the Braids program. The first author would like to thank Rima Chatterjee for interesting conversations about lifting braids to simple covers. This research was supported in part by the Austrian Science Fund (FWF) P 34318. For open access purposes, the author has applied a CC BY public copyright license to any author-accepted manuscript version arising from this submission.

\section{Background and Definitions}\label{sec:background}

This paper relates two distinct mathematical objects, branched covers and coloured braids.  We provide a brief introduction to each here and refer the reader to \cite{MP} or \cite{Apol} for more details.    

\subsection{Branched covers}\label{sec:brcov}

The following definition of a branched cover will suffice for our purposes, although more general versions exist in the literature. 

 \begin{definition}\label{def:simple}
The map $\pi:(\widetilde{\Sigma}, \wpts) \rightarrow (\Sigma, \mathbf{p})$ is a \textit{branched cover} if $\wpts\subset \widetilde{\Sigma}$ is a codimension-2 submanifold  and the restriction $\underline{\pi}:(\ws\setminus \wpts )\rightarrow (\Sigma\setminus \textbf{p})$ is a covering map.   A branched cover is a \textit{$d$-fold simple cover} if if $|\pi^{-1}(x)|\in \{d, d-1\}$ for every $x\in \Sigma$.  The subset of $\pi^{-1}(\mathbf{p})$ where branching occurs is called the \textit{branch locus}. 
\end{definition}

Branched covers are classified up to \textit{equivalence}, where $\pi:\widetilde{\Sigma}\rightarrow \Sigma$ and $\pi':\widetilde{\Sigma}'\rightarrow \Sigma'$ are equivalent if there exist $\beta\in \text{Homeo}^+(\Sigma, \Sigma')$ and $\widetilde{\beta}\in \text{Homeo}^+(\ws, \ws')$ such that $\pi'\circ \widetilde{\beta}=\beta\circ \pi$. In this paper, we study only one equivalence class of branched covers at a time.  

When $\Sigma$ is a surface, the branching locus consists of \textit{branch points} and  we call  $\mathbf{p}$ the \textit{branch values}.  In this case, the local model for $s$-fold branching takes a neighbourhood of a branch point to a neighbourhood of a branch value via a map locally parametrised by $z\mapsto z^s$. 

A branched cover of a surface with boundary is determined up to equivalence by data describing the local behaviour of the cover over each branch value \cite{BE} \cite{MP}.  Specifically, fix a  basepoint $x$ on the boundary and a collection of curves with disjoint interiors that connect $x$ to the branch values.   Following \cite{WWW}, we call this set of curves a \textit{basis}. To each curve $\gamma_i$ in a basis, associate the clockwise-oriented loop based at $x$ that encircles $\gamma_i$ and no other branch values.  Let $S_d$ denote the symmetric group on $d$ letters.
The (unbranched) covering map $\underline{\pi}:(\ws\setminus \wpts )\rightarrow (\Sigma\setminus \textbf{p})$ has a monodromy homomorphism $\pi_1(\Sigma \setminus \mathbf{p}, x)\rightarrow S_d$,  defined up to conjugation in $S_d$, and we label each branch value with the permutation associated to the lift of the loop encircling it.  When $\pi$ is a simple branched cover, each monodromy label is a single transposition $t_i\in S_d$.   

Henceforth we will consider only simple branched covers of the disc with degree at least 3.  In this case, define the \textit{standard basis} to be the one shown in black on Figure~\ref{fig:construct}: the basepoint is placed at the bottom of the disc and the curves are line segments connecting it to branch values lying on the horizontal equator. 

\begin{figure}[h]
\begin{center}
\includegraphics[scale=.7]{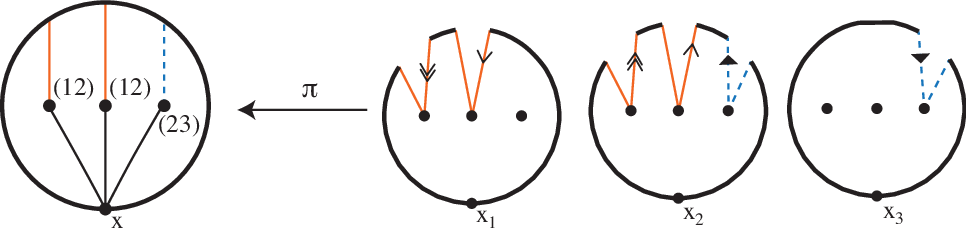}
\caption{ Left: a disc with the standard basis $\Gamma$, vertical cuts $\Delta$, and monodromy labels $t_i$ determining a simple cover. Right: the branched cover associated to the initial data is an identification space built from three copies of the original disc cut open along $\Delta$ arcs.} \label{fig:construct}
\end{center}
\end{figure}

The  transpositions labeling a marked disc  provide concrete instructions for building simple branched covers, as shown in Figure~\ref{fig:construct}.  First, cut  $D^2$ along  a set of disjoint arcs $\Delta$ with the property that each $p_i$ is connected to $\partial \Sigma$ by a single arc $\delta_i$ whose interior is disjoint from $\Gamma$.  It follows that $\delta_i$ intersects the loop encircling $\gamma_i$ once.  Take $d$ copies of this cut-open disc and label the lifts of the basepoint $x_1, x_2, \dots, x_d$.  The boundary of the cut-open surface has distinguished intervals, each containing a preimage of a single branch value. If the branch value $p_i$  is labeled by the transposition $(jk)$, identify the distinguished intervals on the $j^{th}$ and $k^{th}$ copies of the cut open disc and re-glue along the cut in the other copies.  It follows that the loop based at $x\in D^2$ that encircles $\gamma_i$ lifts to a path in $\ws$ connecting the basepoint lifts $x_j$ and $x_k$.

\begin{remark}\label{rmk:build} This observation leads to  a convenient depiction of a simple cover of the disc as the neighbourhood of a \textit{rigid vertex graph}.  Also called a \textit{ribbon graph}, such graphs fix the cyclic order of the edges at each vertex.  In the present case,  the vertex set is the lifted basepoints $\{x_i\}$ and there is an edge through the lifts of the cuts $\Delta$ that contain branch points.  If a branch value $p_i$ is labeled with the transposition $(jk)$, the edge through $\widetilde{p_i}$ has endpoints at $x_j$ and $x_k$.  Strictly speaking, the edges respect a linear order rather than a cylic one, as they  meet each basepoint $x_j$ with the same order the original basis entered $x$; this condition that will reappear in Definition~\ref{def:graphobj}.    

An example is shown in Figure~\ref{fig:graph}. We will sometimes draw covering surfaces thus for convenience, but we emphasise that the cover is completely determined by the copies of the cut-open disc and the identifications along the cuts as in Figure~\ref{fig:construct}.

\begin{figure}[h]
\begin{center}
\includegraphics[scale=.7]{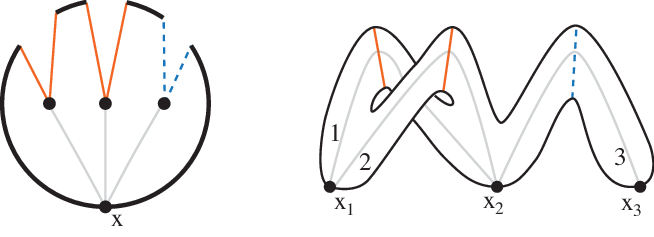}
\caption{ Left: the cut-open disc from Figure~\ref{fig:construct}, shown with $\Gamma$ arcs. Right: the branched cover is a neighbourhood of the  $\pi^{-1}(\Gamma)$ arcs that terminate at branch points.} 
\label{fig:graph}
\end{center}
\end{figure}
\end{remark}

The \textit{total monodromy} $\mu$ of a simple cover is the image of the boundary of $D^2$ under the monodromy homomorphism.  Cycles in the total monodromy are in one-to-one correspondence with boundary components of $\widetilde{\Sigma}$. In the construction described above, each boundary component contains the basepoints $x_j$ with indices in the corresponding cycle, and $\mu$ can be computed as a product of the transpositions $t_1\dots t_n$.  This data suffices to classify covers:

\begin{theorem}[\cite{MP}, \cite{BE}] Two simple branched covers of the disc are equivalent if and only if their total monodromies are conjugate in $S_d$.
\end{theorem}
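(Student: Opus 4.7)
The plan is to prove the two implications separately, with the reverse carrying the bulk of the work.

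For the forward direction, suppose $(\beta,\widetilde\beta)$ realizes an equivalence $\pi\sim\pi'$. Fix a basepoint $x\in\partial D^2$; after an isotopy, assume $\beta(x)=x$. Then $\widetilde\beta$ permutes $\pi^{-1}(x)=\{x_1,\dots,x_d\}$ by some $g\in S_d$. The boundary loop $\partial D^2$ lifts via $\pi$ to paths from $x_i$ to $x_{\mu(i)}$ and via $\pi'$ to paths from $x_i$ to $x_{\mu'(i)}$. Applying $\widetilde\beta$ to the first family of lifts and using $\pi'\circ\widetilde\beta=\beta\circ\pi$ yields $\mu' = g\mu g^{-1}$.

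For the reverse direction, observe first that Riemann--Hurwitz gives $\chi(\ws) = d-n$, so any equivalence forces the two covers to have the same number $n$ of branch values; we therefore fix $n$. After an orientation-preserving self-homeomorphism of $D^2$, we may arrange both covers to share the same branch values and the same standard basis $\Gamma$. Each cover is then encoded, as in Remark~\ref{rmk:build}, by a sequence of transpositions $(t_1,\dots,t_n)$ labeling the basis arcs. Two such sequences describe equivalent covers precisely when they are related by (i) simultaneous conjugation $(t_i)\mapsto(g t_i g^{-1})$, corresponding to relabeling the $d$ sheets, and (ii) Hurwitz moves $(\dots,t_i,t_{i+1},\dots)\mapsto(\dots,t_i t_{i+1} t_i^{-1},t_i,\dots)$, corresponding to half-twists braiding the adjacent branch values $p_i$ and $p_{i+1}$.

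Both operations manifestly preserve the conjugacy class of $\mu = t_1\cdots t_n$, which gives the necessity of conjugacy. Sufficiency reduces to the classical \emph{Hurwitz transitivity} statement: any two length-$n$ transposition factorizations of a fixed $\mu\in S_d$ are related by Hurwitz moves, up to simultaneous conjugation. This purely algebraic step is the main obstacle in the argument; I would prove it by induction on $n$ and $d$, using Hurwitz moves to collect all transpositions involving the letter $d$ at one end of the sequence, whereupon the remaining prefix becomes a factorization in $S_{d-1}$ to which the inductive hypothesis applies. The cycle structure of $\mu$ controls the base cases and ensures that the inductive normalization can always be carried out.
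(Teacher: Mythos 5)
The paper does not prove this statement; it is imported wholesale from Berstein--Edmonds and Mulazzani--Piergallini, so your attempt can only be judged on its own terms. Your overall architecture is the standard one and matches the cited sources: the forward direction via lifting the boundary loop is correct, and the reduction of the reverse direction to transitivity of the Hurwitz action (plus simultaneous conjugation) on transposition tuples is the right move. But there is a genuine gap in the key lemma. As you state it --- ``any two length-$n$ transposition factorizations of a fixed $\mu\in S_d$ are related by Hurwitz moves, up to simultaneous conjugation'' --- the claim is false. Both Hurwitz moves and simultaneous conjugation preserve (up to conjugacy) the subgroup generated by the tuple, so for example in $S_4$ the tuples $\big((12),(12),(34),(34)\big)$ and $\big((12),(12),(12),(12)\big)$ both have product the identity yet generate non-conjugate subgroups and hence lie in different Hurwitz orbits; the corresponding covers (two annuli versus a twice-punctured torus together with two discs) are indeed inequivalent. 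The missing hypothesis is that the tuple generates a transitive subgroup of $S_d$, equivalently that $\ws$ is connected --- an assumption the theorem makes implicitly and which your proof must carry along, in particular through the inductive step.

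Second, even with transitivity imposed, the Hurwitz-transitivity statement \emph{is} the theorem; your induction sketch does not yet engage with where the difficulty lies. After using Hurwitz moves to segregate the transpositions containing the letter $d$, you still must (i) show that they can all be normalized to a single transposition $(d-1\ d)$ occurring a controlled number of times (note that the normal form $\tau_0$ in Section~\ref{sec:background} allows an arbitrary odd number of copies of $(d-1\ d)$, so the count is constrained only mod $2$), and (ii) verify that the remaining prefix is again a \emph{transitive} tuple in $S_{d-1}$ with the appropriate product, so that the inductive hypothesis applies. Point (ii) is not automatic and point (i) is exactly the content of \cite[Lemma~1]{MP}. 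Since the statement is a quoted classification theorem, the honest options are either to cite Clebsch/Hurwitz/Berstein--Edmonds for the transitivity of the Hurwitz action on transitive transposition tuples, or to carry out the normalization to $\tau_0$ in full; the current sketch does neither.
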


Let $\mathcal{T}(\mu)$ denote the set of labels on a marked disc associated to a fixed total monodromy $\mu$; when $\mu$ is understood, we will often write simply $\mathcal{T}$.  As shown by Mulazzani and Piergallini, for each equivalence class of covers there is a distinguished total monodromy $\mu$ and label $\tau_0 \in \mathcal{T}(\mu)$ with the following form:
\[ (12) ((12)) (23) ((23)) \dots (d-1\   d) \dots (d-1 \ d).\]

Here, the double parentheses indicate that the second copy of the transposition may or may not appear, so there are one or two copies of each transposition $(i \ i+1)$ for $1\leq i\leq d-2$ and an odd number of copies of the final transposition $(d-1\ d)$.

\subsection{The coloured braid groupoid} 

The $n$-strand braid group $B_n$ is ubiquitous in mathematics. We identify $B_n$ with the mapping class group of the disc with $n$ marked points, where mapping classes fix the boundary of the disc pointwise and the set of marked points setwise.  Let $\sigma_i$ denote the element that exhanges the $i^{th}$ and $(i+1)^{th}$ marked point via a counterclockwise rotation supported in a neighbourhood of the interval connecting them.  The $k-1$ maps of this form generate $B_n$ and give rise to the classical Artin presentation:
 
\begin{equation}\label{def:braidgroup} B_n\cong \langle \sigma_1, \dots, \sigma_{n-1} \ | \ \sigma_i\sigma_{i+1}\sigma_i=\sigma_{i+1}\sigma_i\sigma_{i+1}, \sigma_{i}\sigma_j=\sigma_{j}\sigma_i \text{ for  }|i-j|>1\rangle.\end{equation}

We also view braids as tangles in $D^2\times I$ where the strands are nowhere tangent to $D^2\times \{t\}$;  braids are drawn from left to right, following the conventions shown in Figure~\ref{fig:pos} to unite these two perspectives.

\begin{figure}[h]
\begin{center}
\includegraphics[scale=.6]{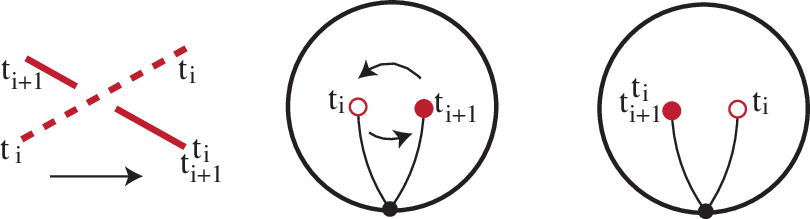}
\caption{Conventions for a positive generator $\sigma_i$. Here $t_i$ is the initial transposition labeling strand $i$ and $t_{i+1}$ is the initial transposition labeling strand $i+1$.} 
\label{fig:pos}
\end{center}
\end{figure}

The $n$-strand $d$-coloured braid groupoid $\text{ColB}_{n,d}$ enhances $B_n$ by decorating each strand of a braid diagram with a permutation in $S_d$ subject to the following rule also noted in the introduction: when a strand labeled $t_i$ crosses  over a strand labeled $t_j$, then the label on the overstrand is preserved as $t_i$ while the understrand label changes to the conjugate $t_j^{t_i}=t_it_jt_i$.  It follows that once initial labels in a coloured braid are specified, the remaining labels are completely determined.  For economy, we will sometimes make use of this, but  we more frequently choose to over-label braid diagrams for clarity.  See, for example, Figure~\ref{fig:ex1}.

Each coloured braid comes with a pair of distinguished end labels, and two coloured braids are composable via concatenation if and only the initial labels of the second braid match the terminal labels of the first.  Because all braids are invertible but not all pairs of braids are composable, coloured braids form a groupoid rather than a group.  Following \cite{Apol}, an object of the $n$-strand $d$-coloured braid groupoid is a copy of the marked disc $D$ where each of the $n$ marked points is labeled  with a transposition in $S_d$.  The morphisms are coloured braids on $n$ strands, up to isotopy fixing the endpoints, and $\beta$ lies in Hom($D_1, D_2$) when the initial strand labels match those of $D_1$ and the final labels match $D_2$. 

Just as a group can be viewed as either a category with one object or a set where any two elements may be multiplied, we may view a groupoid as a category where every morphism is invertible or as a set where only some pairs of elements may be composed.  Although equivalent, these perspectives prompt different narrative treatments and we will make use of both. 

  \subsection{Braids and branched covers}\label{sec:braids}
  
  Given a  disc marked with $n$ points  $\mathbf{p}$, any braid $\beta\in B_{n}$ acts on $(D^2, \mathbf{p})$ as a mapping class.  When the marked points have monodromy labels relative to some basis, then it is natural to view $\beta$ as a coloured braid that acts on the labels by the \textit{Hurwitz action}. Suppose that $\tau=(t_1,t_2, \dots, t_n)$.  Then
  \[\sigma_i\cdot \tau=\sigma_i\cdot(t_1, t_2, \dots, t_i, t_{i+1}, \dots t_n)=(t_1, t_2, \dots, t_i t_{i+1} t_i, t_{i}, \dots t_n).\]  
     This extends to an action of any braid $\beta$ by factoring $\beta$ as a product of generators and applying the map associated to each one in turn.  
     
     Equivalently, one may start with a coloured braid.  In this case, the marked points inherit labels $\tau$ from the initial strand labels and we set $\pi_\tau:\ws_\tau \rightarrow D$ to be the associated cover, constructed as usual with respect to the standard basis. 
     
In either case, the coloured braid maps between states of the coloured braid groupoid.  When the initial and final states agree, the same surface $\ws_\tau$ covers both labeled discs.

\begin{definition} The coloured braid $\beta$ is \textit{liftable} with respect to $\pi:\ws\rightarrow D$ if there exists a mapping class $\widetilde{\beta}: \ws\rightarrow \ws$ such that $\beta\cdot \pi=\pi\cdot \widetilde{\beta}$.   
\end{definition}

In fact, every coloured braid with the same initial and terminal colours is liftable, and the lift is unique \cite{MP}.  Liftable coloured braids are therefore composable, and the subgroup of coloured braids that are liftable with respect to $\pi$ is denoted by $LB_{n,d}$.   As noted above, we will assume throughout that $d\geq 3$.  The map taking $\beta$ to $\widetilde{\beta}$ is the \textit{lifting homomorphism} from $LB_{n,d}\rightarrow  \text{Mod}(\ws)$. 

The  liftable braid group has a number of nice properties.  For example, suppose $C$ is an arc embedded in $D$ with $C\cap \mathbf{p}=\partial C$.  A \textit{twist along $C$} exchanges the endpoints of $\partial C$ via a counterclockwise rotation of a neighbourhood of $C$ in the complement of the other branch values.  The twist along $C$ is denoted by $\sigma_C$, and each Artin generator is a twist along the line segment between consecutive branch values $p_i$ and $p_{i+1}$.   In \cite{MP}, Mulazzani and Piergallini showed that for every $C$, some power of $\sigma_C$ is liftable, and furthermore, that $LB_{n,d}$ is generated by liftable powers of these twists.  In their language, an arc $C$ is Type $i$ if the $i^{th}$ power of $\sigma_C$ is the minimal liftable power, and every arc is Type 1, 2, or 3.  

Generating sets were described more precisely for specific families of branched covers, culminating in the work of Wajnryb and Wi\'sniokska-Wajnryb that gave an explicit  finite set of arcs $\mathcal{W}$ on the marked disc labeled by $\tau_0$ whose liftable twists generate $\text{BCol}_{n,d}$ \cite{WWW}  \cite{WW1}.

In the next section, we lift the coloured braid groupoid associated to a fixed total monodromy to the mapping class groupoid of the covering surface.

\section{The mapping class groupoid}\label{sec:groupoids}

The existence of a lifting functor from the coloured braid groupoid to a mapping class groupoid of the covering surface is known; see e.g., \cite{Apol}.  While the mapping class group of a surface has a precise definition, there is no distinguished mapping class groupoid, as the assignment of distinct objects to a fixed surface is a matter of choice.   In this section, we choose a model for the mapping class groupoid of a covering surface and define a homomorphism from the coloured braid groupoid to this model.

Given $n$ and $d$, the coloured braid groupoid splits into connected components indexed by the total monodromy $\mu$ of the associated covers.  When we refer to ``the'' coloured braid groupoid, we will always mean a fixed connected component, as distinct components have no interesting topological or algebraic interactions. Throughout this section, let $\mathcal{T}=\mathcal{T}(\mu)$ be the set of distinct labelings $\tau=(t_1, t_2, \dots, t_n)$ such that $t_1t_2 \dots t_n=\mu$.  For $\tau\in \mathcal{T}$, let $(\ws_\tau, \pi_\tau)$ be the associated branched covering constructed as an identification space as in Section~\ref{sec:brcov}.  Fix lifts of the basepoint $\{x_1, \dots, x_d\}\subset \partial \ws_\tau$; we consider the labeled basepoints to be an intrinsic feature of the covering surface.  For all $\tau \in \mathcal{T}$, the surfaces $\ws_\tau$ are homeomorphic via maps respecting these numbered basepoints.

\begin{definition}\label{def:mcgoid} Set $\mathcal{M}(\mu)$ to be the groupoid whose objects are the covering surfaces $\ws_{\tau}$ for $\tau\in \mathcal{T}$, considered up to isotopy fixing $\partial \ws_{\tau}$. The morphisms of $\mathcal{M}(\mu)$ are homeomorphisms that preserve the indexed basepoints.  Two morphisms acting on the same object are equivalent if their images are isotopic relative to $\partial \ws_\tau$. 
\end{definition}

If all the points are labeled with the same transposition, then there is a single object and $\mathcal{M}(\mu)$ is the mapping class group of the double branched cover of the marked disc.  Under the assumption that $d\geq 3$, a connected component of the coloured braid groupoid will have multiple labels and hence, multiple objects.   For each object $\ws_\tau$, restricting to the self-morphisms of $\ws_\tau$ recovers a copy of the mapping class group of $\ws_\tau$, and a classically liftable braid acting on the disc labeled by $\tau$ is one whose lift is such a mapping class.  However, using a groupoid allows a more general definition:

\begin{definition}\label{def:genlift} Let $\beta$ be a coloured braid acting on the marked disc with initial labels $\tau$.  The \textit{lift} of $\beta$ is the homeomorphism $\widetilde{\beta}: \ws_{\tau}\rightarrow \ws_{\beta\cdot \tau}$ such that \[\beta\circ \pi_\tau=\pi_{\beta\cdot \tau}\circ \widetilde{\beta}.\]
\end{definition}

When $\beta\cdot \tau=\tau$, this recovers the classical definition of the lift of a coloured braid.  However, every coloured braid is liftable in the sense of Definition~\ref{def:genlift}; see Theorem~\ref{thm:lift}.  

In order to identify the lift of an arbitrary coloured braid, we will define a second structure, the graphical groupoid $\mathcal{G}(\mu)$.  Each object in $\mathcal{G}(\mu)$ is a pair $(\ws_\tau, \widetilde{\Gamma})$ consisting of an object from $\mathcal{M}(\mu)$ together with some decorations $\widetilde{\Gamma}$ which are characterised in Definition~\ref{def:graphobj}.  For each label $\tau\in \mathcal{T}$, there is a distinguished object $O_\tau=(\ws_\tau, \widetilde{\Gamma}_\tau)$. Morphisms in $\mathcal{G}(\mu)$ preserve the surface but alter the decorations. 
Definition~\ref{def:homom} defines a groupoid homomorphism $\phi: \text{ColB}_{n,d}\rightarrow \mathcal{G}(\mu)$.

\begin{proposition}\label{prop:label} If $\beta$ is a coloured braid that acts on the marked disc labeled by $\tau$, then there is a homeomorphism $\Phi(\beta)\colon\widetilde{\Sigma}_\tau\to \widetilde{\Sigma}_{\beta\cdot\tau}$ such that $\Phi(\beta)(\phi(\beta) \cdot O_\tau)=O_{\beta\cdot \tau}$. The isotopy class of $\Phi(\beta)$ relative to the boundary is unique.
\end{proposition}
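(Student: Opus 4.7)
The strategy is to decompose $\beta$ as a word in the coloured braid generators and to build $\Phi(\beta)$ as a concatenation of elementary homeomorphisms, one associated to each $\sigma_i^{\pm 1}$ in the word. For this to succeed I need two ingredients: (i) a recipe that realises each combinatorial move $\phi(\sigma_i)$ on the decorations by an honest homeomorphism between the corresponding covering surfaces, and (ii) an argument that the composite takes $\phi(\beta)\cdot O_\tau$ to $O_{\beta\cdot\tau}$, independent of the chosen factorisation.

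First I would unpack Definition~\ref{def:homom} for a single generator. Each arcslide and index swap used to define $\phi(\sigma_i)$ is supported in a disc neighbourhood of its arcs and so extends to an ambient homeomorphism of the covering surface with the prescribed effect on the decorations. Composing these elementary homeomorphisms with the canonical identification of the underlying labelled surfaces yields a candidate $\Phi(\sigma_i)\colon\widetilde{\Sigma}_\tau\to\widetilde{\Sigma}_{\sigma_i\cdot\tau}$. Iterating along a word for $\beta$ produces a candidate $\Phi(\beta)$.

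The heart of the argument is a combinatorial rigidity claim: the decorated surface $\phi(\beta)\cdot O_\tau$ presents the same ribbon-graph data of Remark~\ref{rmk:build} as the standard object $O_{\beta\cdot\tau}$, namely the indexed basepoints $\{x_1,\dots,x_d\}$ joined by arcs encoding the transpositions of $\beta\cdot\tau$ in the correct linear order at each vertex. Because a simple cover is reconstructed from precisely this combinatorial data and the complement of the decoration in $\widetilde{\Sigma}$ is a disjoint union of discs, there is a homeomorphism of pairs $(\widetilde{\Sigma}_\tau,\phi(\beta)\cdot O_\tau)\to(\widetilde{\Sigma}_{\beta\cdot\tau},O_{\beta\cdot\tau})$ that sends arcs to arcs and is extended across the disc components by the Alexander trick. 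The same analysis delivers uniqueness up to isotopy relative to $\partial\widetilde{\Sigma}_{\beta\cdot\tau}$: any two candidates differ by a self-homeomorphism of $\widetilde{\Sigma}_{\beta\cdot\tau}$ that fixes the indexed basepoints and the arcs of $\widetilde{\Gamma}_{\beta\cdot\tau}$ setwise, and Alexander's trick on each disc component of the complement forces this self-homeomorphism to be isotopically trivial.

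The main obstacle I anticipate is verifying that $\phi(\beta)\cdot O_\tau$ and $O_{\beta\cdot\tau}$ really do carry the same ribbon-graph data. For a single generator $\sigma_i$ this amounts to a local check tracking how the arcslides and index swaps interact with the Hurwitz action, so that the new edges incident to the basepoints involved in $t_i$ and $t_{i+1}$ realise the updated transpositions $t_it_{i+1}t_i$ and $t_i$ in the correct linear order at each vertex. Once this compatibility is confirmed for each generator and its inverse, the statement extends by induction to an arbitrary word for $\beta$, and independence of the chosen factorisation is absorbed into the rigidity statement above.
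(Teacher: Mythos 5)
Your proposal is correct and takes essentially the same route as the paper: the ``combinatorial rigidity claim'' at its heart is precisely Lemma~\ref{lem:lab} (a graphical object is determined up to a unique isotopy class of homeomorphisms by its indexed ribbon-graph data, since the complement is discs and uniqueness follows from the Alexander method), and the label-compatibility check you flag as the main obstacle is exactly Proposition~\ref{prop:comp}. The preliminary generator-by-generator construction of $\Phi(\beta)$ in your first paragraph is redundant once the rigidity claim is in place, which is why the paper dispenses with it and derives the proposition directly from the lemma.
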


When $\beta\cdot \tau=\tau$, it follows that $\Phi(\beta)$ is a map from $\ws_\tau$ to itself.

\begin{theorem}\label{thm:premain} When $\beta$ is a liftable braid,  the mapping class $\Phi(\beta)$ is the lift $\widetilde{\beta}$.
\end{theorem}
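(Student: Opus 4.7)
The plan is to reduce Theorem~\ref{thm:premain} to the classical uniqueness of the lift. When $\beta$ is liftable, $\beta\cdot\tau=\tau$, and by \cite{MP} the classical lift $\widetilde{\beta}\in\text{Mod}(\ws_\tau)$ is the unique basepoint-preserving mapping class (up to isotopy rel $\partial\ws_\tau$) satisfying $\pi_\tau\circ\widetilde{\beta}=\beta\circ\pi_\tau$. It therefore suffices to verify that $\Phi(\beta)$ satisfies the same equation, and the strategy is to do so by reading off what $\Phi(\beta)$ does to the lifted basis $\widetilde{\Gamma}_\tau=\pi_\tau^{-1}(\Gamma)$.

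The key intermediate claim is that, up to isotopy rel $\partial\ws_\tau$, the decoration on $\phi(\beta)\cdot O_\tau$ is the preimage $\pi_\tau^{-1}(\beta^{-1}(\Gamma))$ of the pushed-back basis. I would prove this by induction on the length of a factorisation $\beta=\sigma_{i_m}^{\varepsilon_m}\cdots\sigma_{i_1}^{\varepsilon_1}$ into coloured braid generators, using that $\phi$ is a groupoid homomorphism so $\phi(\beta)=\phi(\sigma_{i_m}^{\varepsilon_m})\circ\cdots\circ\phi(\sigma_{i_1}^{\varepsilon_1})$. The base case is a single generator: one must check that the arcslide-plus-index-swap defining $\phi(\sigma_i^{\pm 1})$ in Definition~\ref{def:homom} converts the two relevant arcs of $\widetilde{\Gamma}_\tau$ into the preimages of $\sigma_i^{\mp 1}(\gamma_i)$ and $\sigma_i^{\mp 1}(\gamma_{i+1})$, leaving the remaining arcs unaffected. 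This is a local computation in a disc neighbourhood of $\{p_i,p_{i+1}\}$ and its preimage under $\pi_\tau$. The inductive step is then formal, since composition of arcslides tracks the compositional behaviour of preimages downstairs.

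With this identification in hand, Proposition~\ref{prop:label} characterises $\Phi(\beta)$ as the unique isotopy class carrying $\pi_\tau^{-1}(\beta^{-1}(\Gamma))$ to $\pi_\tau^{-1}(\Gamma)=\widetilde{\Gamma}_\tau$ in an index-preserving way at the lifted basepoints. But the classical lift $\widetilde{\beta}$ does exactly this: from $\pi_\tau\circ\widetilde{\beta}=\beta\circ\pi_\tau$ we deduce $\widetilde{\beta}(\pi_\tau^{-1}(\beta^{-1}(\Gamma)))=\pi_\tau^{-1}(\Gamma)$, and $\widetilde{\beta}$ respects indexed basepoints by its definition as a lift in $\mathcal{M}(\mu)$. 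Since $\widetilde{\Gamma}_\tau$ cuts $\ws_\tau$ into a disjoint union of discs (one for each lifted basepoint $x_j$, by the construction in Section~\ref{sec:brcov}), the standard fact that a boundary-fixing mapping class is determined by its action on an arc system with disc complement forces $\Phi(\beta)=\widetilde{\beta}$.

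The main obstacle I anticipate is the base-case bookkeeping in the second paragraph: confirming that Definition~\ref{def:homom}'s assignment for $\sigma_i^{\pm 1}$ really does lift the local action of $\sigma_i^{\mp 1}$ on $\Gamma$ at the level of preimage arcs. This will split into cases according to whether the transpositions $t_i$ and $t_{i+1}$ decorating the two crossing strands commute, agree, or share exactly one index; each case requires drawing the specified arcslide on the preimage annulus around $\{p_i,p_{i+1}\}$ and matching it against the preimage of the braid-pushed basis arc, with the index swap accounting for the reindexing of sheets forced by the monodromy. Once this local case analysis is in place, the induction and the uniqueness argument are essentially formal.
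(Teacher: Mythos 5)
Your strategy is sound but genuinely different from the paper's. The paper obtains Theorem~\ref{thm:premain} as a special case of Theorem~\ref{thm:main}, whose proof first establishes that $\Phi$ is a groupoid homomorphism (Theorem~\ref{thm:homom2}, proved by comparing how both sides of the composition law act on the lifted cut system $\widetilde{\Delta}$) and then verifies the covering equation $\pi_{\sigma_i\cdot\tau}\circ\Phi(\sigma_i)=\sigma_i\circ\pi_\tau$ one generator at a time, using that the subsurface moved by a single generator is a disc, a pair of discs, or an annulus supporting a Dehn twist; the general case follows by composing. You instead prove one global statement --- that $\phi(\beta)\cdot\widetilde{\Gamma}_\tau$ is isotopic to $\pi_\tau^{-1}(\beta^{-1}(\Gamma))$ --- and then invoke the uniqueness of Proposition~\ref{prop:label} a single time at the end. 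Your intermediate claim is an attractive way to package the content of the whole construction (it says outright that the arcslides track preimages of the pushed-back basis), it would deliver the stronger Theorem~\ref{thm:main} with no extra effort, and the generator-level case analysis you anticipate in your final paragraph is essentially the same case analysis the paper carries out. The concluding appeal to uniqueness of the classical lift together with the Alexander method is also how the paper pins down $\Phi(\beta)$, so the endgames agree.

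The one place you have underestimated the work is the sentence ``the inductive step is then formal.'' After applying $\phi(\sigma_{i_1}^{\varepsilon_1})$, the decoration is no longer the standard lifted basis but (by induction) $\pi_\tau^{-1}(\sigma_{i_1}^{-\varepsilon_1}(\Gamma))$, and the next arcslide is performed on these non-standard arcs, so your base case does not literally apply to it. To close this you must either (i) argue that the operation of Definition~\ref{def:gomaps} is natural under homeomorphisms of decorated surfaces and that $\pi_\tau$, read in the basis $\beta'^{-1}(\Gamma)$ for a partial product $\beta'$, is the standard cover for the label $\beta'\cdot\tau$, so the configuration can be carried back to the standard one before invoking the base case; or (ii) appeal to compatibility of the generator-level statements under composition, which is precisely the content the paper isolates as Theorem~\ref{thm:homom2}. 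Either repair works, but this step is where the substance of the theorem lives rather than a formality, and it deserves an explicit argument.
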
 

A stronger version of this result is stated as Theorem~\ref{thm:main}.

The value in introducing a second groupoid and a second homeomorphism may not be immediately apparent, but $\mathcal{G}(\mu)$ acts as a combinatorial proxy for an abstract groupoid of surfaces and mapping classes.  The next two sections precisely define the structures noted here, and a number of examples may be found in Section~\ref{sec:ex}.

\subsection{The graphical groupoid}\label{sec:gobj}

Choose a surface $\ws_\tau$ for some $\tau\in\mathcal{T}$.

\begin{definition}\label{def:graphobj}  A $(\mu, n, d)$ \textit{graphical object} $O$ is a copy of $\ws_\tau$ for some $\tau\in \mathcal{T}$ decorated with indexed curves $\widetilde\Gamma=\{\widetilde{\gamma}_j\}$ for $1\leq j\leq n$ as follows:
\begin{enumerate}
\item\label{cond1} each $\widetilde{\gamma}_j$ has endpoints on distinct $x_i$ and their interiors are disjointly embedded;
\item each $x_i$ is an endpoint of at least one $\widetilde{\gamma}_j$;
\item\label{disc} the curves $\widetilde\Gamma$ cut $\ws$ into $n$ discs, each of which contains one interval on $\partial \ws_\tau$;
\item\label{cond4} when multiple $\widetilde{\gamma}_j$ have an endpoint on a fixed $x_i$, the linear order in which they meet $x_i$ agrees with the linear order of their indices.
\end{enumerate}

Each object is defined up to isotopy fixing  $\partial \ws_\tau$.
\end{definition}

See Figure~\ref{fig:goex} for two examples of graphical objects.  Note that by (\ref{disc}), $\ws_\tau$ deformation retracts onto $\widetilde{\Gamma}$.

\begin{figure}[h]
\begin{center}
\includegraphics[scale=.6]{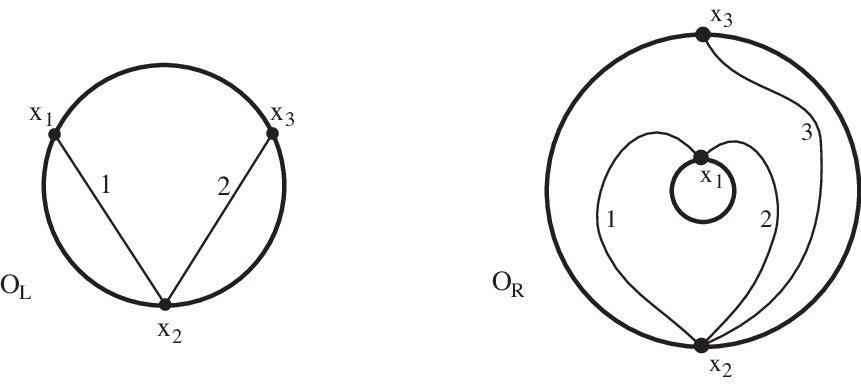}
\caption{ Left: $O_L$ is a $\big( (123), 2, 3\big)$ graphical object.  Right: $O_R$ is a $\big((23), 3, 3\big)$ graphical object. } 
\label{fig:goex}
\end{center}
\end{figure}

One may associate an $n$-tuple of labels in $\mathcal{T}$ to any $(\mu, n, d)$ graphical object $O=(\ws_\rho, \widetilde{\Gamma})$ by assigning the transposition $t_i=(jk)$ to the branch value $p_i$, where $x_j$ and $x_k$ are the endpoints of $\widetilde{\gamma}_i$ on $O$.  Write $L(O)=\tau$ when $\tau$ is the $n$-tuple of labels associated thus to $O$, noting that this $\tau$ is independent of the label $\rho$ associated to the underlying covering surface.  In Figure~\ref{fig:goex}, $L(O_L)= \big(  (12), (23)\big)$ and $L(O_R)= \big(  (12), (12), (23)\big)$.

\begin{definition}\label{def:gomaps}  Given a graphical object $O=(\ws_\rho, \widetilde{\Gamma})$ with $L(O)=\tau$, let $\widetilde{\sigma}^\tau_i\cdot \widetilde{\Gamma}$ be the set of curves on $\ws_\rho$ constructed from $\widetilde{\Gamma}$ as follows: 
at each shared endpoint of $\widetilde{\gamma}_i$ and $\widetilde{\gamma}_{i+1}$, arcslide $\widetilde{\gamma}_{i+1}$ along $\widetilde{\gamma}_{i}$.  For any configuration of  $\widetilde{\gamma}_i$ and $\widetilde{\gamma}_{i+1}$, exchange their indices after any arcslides have been performed. 

We also write $\widetilde{\sigma}^\tau_i\cdot O=(\ws_\rho, \widetilde{\sigma}^\tau_i\cdot \widetilde{\Gamma})$, reflecting the fact that the surface is fixed while the decorations change.
\end{definition}

\begin{figure}[h]
\begin{center}
\includegraphics[scale=.8]{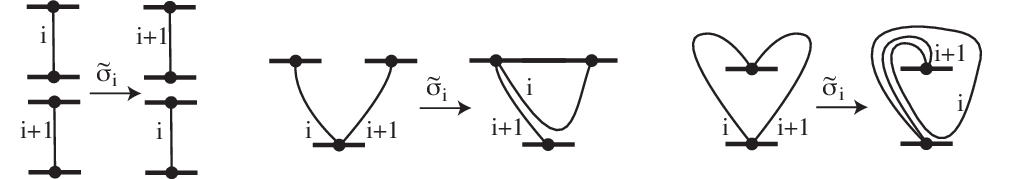}
\caption{  Depending on the configuration of $\widetilde{\Gamma}$ curves on a graphical object, the move labeled $\widetilde{\sigma}_i$ takes one of the forms above.
 } 
\label{fig:moves}  
\end{center}
\end{figure}

These moves are best understood by reference to Figure~\ref{fig:moves}. Note that there are three combinatorially distinct cases, depending on how many endpoints of $\widetilde{\gamma}_{i}$ and $\widetilde{\gamma}_{i+1}$ are shared.

One may easily check that the curves $\widetilde{\sigma}^\tau_i\cdot \widetilde{\Gamma}$ satisfy the hypotheses of Definition~\ref{def:graphobj}, so each $\widetilde{\sigma}^\tau_i$ is in fact a map between graphical objects associated to the same surface.   For convenience, we use the same symbol $\widetilde{\sigma}^\tau_i$ to denote the distinct functions that act on graphical objects associated to the same $\tau$,  as this introduces no ambiguity in practice. 

\begin{definition}\label{def:grgr} The \textit{graphical groupoid}  is the groupoid of graphical objects: \[\mathcal{G}(\mu)=\coprod_{\tau\in \mathcal{T}} (\ws_\tau, \widetilde{\Gamma}).\]  Morphisms are generated by the maps $\widetilde{\sigma}^\tau_i$, subject to the relations that two morphisms are equivalent if they have the same domain and range.  
\end{definition}

The reader may wish to glance ahead at Figures~\ref{fig:s3} and \ref{fig:morph} for examples of composing morphisms in the graphical groupoid to alter the decorations on $\ws_\tau$.  

The final result in this section hints at the motivation for the notation.

\begin{proposition}\label{prop:comp} 
Let $\sigma_i$ be a coloured Artin generator acting on the disc labeled with $\tau$ and let $O \in \mathcal{G}(\mu)$ satisfy $L(O)=\tau$. Then $L(\widetilde{\sigma}^\tau_i\cdot O)=\sigma_i\cdot \tau$.  
\end{proposition}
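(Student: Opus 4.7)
The plan is to unpack both sides of the claimed equality explicitly and compare them entry by entry. Write $\tau = (t_1,\dots,t_n)$ with $t_i = (a\ b)$, so that $\widetilde{\gamma}_i$ has endpoints $x_a, x_b$, and $t_{i+1} = (c\ d)$, so that $\widetilde{\gamma}_{i+1}$ has endpoints $x_c, x_d$. The Hurwitz action of $\sigma_i$ changes only the $i$-th and $(i+1)$-th entries, replacing them with $t_i t_{i+1} t_i$ and $t_i$ respectively, while by Definition~\ref{def:gomaps} the move $\widetilde{\sigma}_i^\tau$ only modifies $\widetilde{\gamma}_i$ and $\widetilde{\gamma}_{i+1}$ before swapping their indices. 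So for every $j\neq i,i+1$ the equality of labels at position $j$ is immediate.

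Next I would verify the $(i+1)$-st entry. The curve indexed $i+1$ after the move is the old $\widetilde{\gamma}_i$ (renamed by the index swap), which is not arcslid and thus retains the endpoints $x_a, x_b$. Hence the new $(i+1)$-st label is $(a\ b)=t_i$, matching the Hurwitz formula exactly.

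The essential step is the $i$-th entry. Here I would treat the three configurations of Figure~\ref{fig:moves} uniformly by identifying the algebraic content of an arcslide: the arcslide of $\widetilde{\gamma}_{i+1}$ along $\widetilde{\gamma}_i$ at a shared endpoint $x_p$ (necessarily $p\in\{a,b\}$) pushes that endpoint across $\widetilde{\gamma}_i$ to its other endpoint $x_{t_i(p)}$. For any endpoint of $\widetilde{\gamma}_{i+1}$ not lying in $\{x_a,x_b\}$, no arcslide is performed, which is consistent because $t_i$, viewed as an element of $S_d$, fixes every index outside $\{a,b\}$. It follows uniformly that the new endpoints of the arcslid $\widetilde{\gamma}_{i+1}$, reindexed to $i$, are $\{x_{t_i(c)}, x_{t_i(d)}\}$, so the label at position $i$ is $(t_i(c)\ t_i(d)) = t_i (c\ d) t_i^{-1} = t_i t_{i+1} t_i$.

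The main obstacle is bookkeeping rather than anything deep: one must check that the three combinatorial cases ($|\{a,b\}\cap\{c,d\}|\in\{0,1,2\}$) are all captured by the single algebraic formula above, and in particular that the both-endpoints-shared case produces a well-defined pair of simultaneous arcslides. Condition~\ref{cond4} of Definition~\ref{def:graphobj} supplies exactly what is needed: since $\widetilde{\gamma}_i$ and $\widetilde{\gamma}_{i+1}$ are adjacent in the linear order at any shared endpoint, each arcslide is unambiguous. Concatenating the three entrywise checks then yields $L(\widetilde{\sigma}_i^\tau\cdot O)=\sigma_i\cdot\tau$.
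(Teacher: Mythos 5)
Your proof is correct and follows essentially the same approach as the paper: both arguments verify the claim by comparing the endpoints of the new $\widetilde{\gamma}_i$ and $\widetilde{\gamma}_{i+1}$ curves against the Hurwitz conjugation formula. The only difference is presentational — the paper checks the three configurations ($0$, $1$, or $2$ shared endpoints) separately, whereas you package them into the single observation that an arcslide moves endpoints exactly as $t_i$ acts on indices; you also make explicit the routine checks (entries $j\neq i,i+1$ and the $(i+1)$-st entry) that the paper leaves implicit.
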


\begin{proof} Recall the formula for the Hurwitz map given in Section~\ref{sec:braids}:
  \[\sigma_i\cdot(t_1, t_2, \dots, t_i, t_{i+1}, \dots t_n)=(t_1, t_2, \dots, t_i t_{i+1} t_i, t_{i}, \dots t_n).\]  
  
 In each of the three geometrically distinct versions of $\sigma_i^\tau$, we compare the change in labels above to the new endpoints of $\widetilde{\gamma}_i$ and $\widetilde{\gamma}_{i+1}$.

\begin{enumerate} 
\item If the original $\widetilde{\gamma}_i$ and $\widetilde{\gamma}_{i+1}$ were disjoint, then the new $\widetilde{\gamma}_{i+1}$ curve connects the same basepoints as the original $\widetilde{\gamma}_i$ curve.  This agrees with the Hurwitz map, as conjugating one transposition by a disjoint transposition is the identity map.  
\item If the original $\widetilde{\gamma}_i$ and $\widetilde{\gamma}_{i+1}$ shared two endpoints, then they share the same two endpoints after the arcslides are applied.  This agrees with the Hurwitz map, as conjugating a transposition by itself is the identity map.  
\item If the original $\widetilde{\gamma}_i$ and $\widetilde{\gamma}_{i+1}$ shared a single endpoint $x_k$, then sliding $\widetilde{\gamma}_{i+1}$ along $\widetilde{\gamma}_i$ replaces the original label $(jk)$ by $(jl)$, where $x_l$ was the other endpoint of $\widetilde{\gamma}_i$.  This agrees with the Hurwitz conjugation map: $(kl)(jk)(kl)=(jl)$. 
\end{enumerate}
\end{proof}

\subsection{Branched covers and the graphical category}

To each $\tau \in\mathcal{T}$, we associate  a distinguished graphical object $O_\tau\in \mathcal{G}(\mu)$, as follows.

Lift each basis curve $\gamma_i$ on the marked disc to its preimage on $\ws_\tau$.   Two of these preimages meet at a branch point; denote the union of these two arcs by $\widetilde{\gamma_i}$ and discard the components that terminate at non-branching preimages of $p_i$.   Define $\widetilde{\Gamma}_\tau:=\cup_i \widetilde{\gamma_i}$ and set $O_\tau=(\ws_\tau, \widetilde{\Gamma}_\tau)$.  Note that $\widetilde{\Gamma}_\tau$ is the graph described in Remark~\ref{rmk:build}.  The right-hand side of Figure~\ref{fig:graph} provides an example of this construction.  

It is not difficult to verify that $O_\tau$ is a graphical object, with the most subtle point being the claim that $\widetilde{\Gamma}_\tau$ cuts $\ws_\tau$ into discs; to verify this, cut $d$ copies of $D$ along both the basis arcs $\Gamma$ and the cut arcs $\Delta$ before making the identifications along $\Delta$. 

This construction immediately implies the following:

\begin{corollary} With $O_\tau$ as above, $L(O_\tau)=\tau$.  
\end{corollary}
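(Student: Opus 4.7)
The plan is to unpack the explicit construction of $O_\tau$ from Section~\ref{sec:brcov} and verify, branch value by branch value, that the endpoints of $\widetilde{\gamma}_i$ coincide with the pair of basepoints indexed by the transposition $t_i$. Since $L(O_\tau)_i$ is defined to be $(jk)$ precisely when $\widetilde{\gamma}_i$ has endpoints at $x_j$ and $x_k$, this identification delivers the claim.

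First I would recall the identification-space model of $\ws_\tau$: take $d$ copies of $D^2$ cut open along $\Delta$, with the $m^{\text{th}}$ copy containing the lifted basepoint $x_m$, and glue the two distinguished intervals along $\delta_i$ on the $j^{\text{th}}$ and $k^{\text{th}}$ copies exactly when $t_i=(jk)$, re-gluing the cut along $\delta_i$ on the remaining $d-2$ copies. Since $\gamma_i$ is a line segment from $x$ to $p_i$ whose interior is disjoint from $\Delta$, it lies inside each cut-open disc, and $\pi_\tau^{-1}(\gamma_i)$ consists of $d$ arcs, one in each copy, each running from the relevant $x_m$ to the preimage of $p_i$ in that copy.

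Next I would observe how branching distinguishes two of those $d$ preimages of $p_i$. By the gluing rule, the preimages of $p_i$ in the $j^{\text{th}}$ and $k^{\text{th}}$ copies are identified to a single branch point, while the preimages in the remaining copies are non-branching points of $\ws_\tau$. By definition $\widetilde{\gamma}_i$ is the union of the two lifts of $\gamma_i$ that terminate at a branch point, with the other lifts discarded. Consequently $\widetilde{\gamma}_i$ is an arc with one endpoint at $x_j$ and the other at $x_k$, passing through the branch point over $p_i$. Applying the labeling rule yields $L(O_\tau)_i=(jk)=t_i$ for every $i$, hence $L(O_\tau)=\tau$.

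There is essentially no obstacle here; the only care required is in matching the convention of Definition~\ref{def:graphobj} (indices of endpoints on the $x_m$) with the convention governing the monodromy labels (transpositions acting on $\{1,\dots,d\}$), but these are set up consistently in Section~\ref{sec:brcov}. The proof is therefore a short verification which can be stated in a sentence or two once the construction has been recalled.
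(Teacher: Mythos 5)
Your proof is correct and is exactly the verification the paper has in mind: the paper offers no separate argument, declaring the corollary ``immediate'' from the construction of $O_\tau$ together with Remark~\ref{rmk:build}, which already records that the edge through $\widetilde{p_i}$ has endpoints at $x_j$ and $x_k$ when $t_i=(jk)$. Your unpacking of the identification-space model and the branching preimages of $p_i$ is the intended (and only) content of that observation.
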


Recall that for each fixed graphical object $O$, there is a morphism $\widetilde{\sigma}^{L(O)}_i$ that acts on $O$ by arcslides and index exchanges on $\widetilde{\gamma}_i$ and $\widetilde{\gamma}_{i+1}$.

It follows from Proposition~\ref{prop:comp} that the map sending the Artin generator $\sigma_i$ to the morphism $\widetilde{\sigma}_i$ respects composition.  Specifically, recall that the coloured braid $\sigma_{i_2}\sigma_{i_1}$ acts on the marked disc  with initial labels $\tau$ exactly when the coloured Artin generator $\sigma_{i_1}$ acts on $\tau$ and the coloured Artin generator $\sigma_{i_2}$ acts on $\sigma_{i_1}\cdot \tau$.  Proposition~\ref{prop:comp} implies that if the morphism $\widetilde{\sigma}^\tau_{i_1}$ acts on a graphical object $O$, then $\widetilde{\sigma}^{\sigma_{i_1}\cdot \tau}_{i_2}$ will act on $\widetilde{\sigma}_{i_1}^\tau\cdot O$.  

The notation quickly becomes unwieldy, so we henceforth drop the superscript  identifying the label of the graphical object,  writing $\widetilde{\sigma}_i \cdot O$ instead of $\widetilde{\sigma}_i^\tau \cdot O$.  This is analogous to writing $\sigma_i$ to denote the coloured Artin generator that acts on the marked disc with labels $\tau$;  similarly, it causes no confusion in practice.

\begin{definition}\label{def:homom} For any coloured braid \[\beta=\sigma^{\epsilon_m}_{i_m}\dots\sigma^{\epsilon_1}_{i_1}\] factored as a product of coloured Artin generators with $\epsilon_i\in \pm1$, define \[\phi(\beta)=\widetilde{\sigma}^{\epsilon_m}_{i_m}\dots \widetilde{\sigma}^{\epsilon_1}_{i_1}.\] 
\end{definition} 

If $\beta$ acts on a disc with labels $\tau$, then $\phi(\beta)$ acts on any graphical objects $O$ with $L(O)=\tau$.  In fact, the map $\phi$ is a homomorphism from the coloured braid groupoid to the graphical groupoid in the following sense:

\begin{proposition}\label{prop:braidrel} Suppose that $\beta=\sigma_{i_m}\dots\sigma_{i_1}$ and $\beta=\sigma_{j_p}\dots\sigma_{j_1}$ are two factorisations of $\beta$.  Then $\widetilde{\sigma}_{i_m}\dots \widetilde{\sigma}_{i_1}=\widetilde{\sigma}_{j_p}\dots \widetilde{\sigma}_{j_1}$.
\end{proposition}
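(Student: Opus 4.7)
The plan is to reduce Proposition~\ref{prop:braidrel} to verifying that $\phi$ respects the defining relations of the coloured braid groupoid. By Definition~\ref{def:grgr}, two morphisms of $\mathcal{G}(\mu)$ are identified whenever they share a domain and range; thus it suffices to show that for any graphical object $O$ with $L(O)=\tau$, both factorizations produce the same graphical object $\phi(\beta)\cdot O$. Since any two factorizations of a coloured braid $\beta$ are related by finite sequences of the local moves inherited from the Artin presentation in Equation~\ref{def:braidgroup}, together with the groupoid inverse relation, the proposition reduces to checking three identities after applying $\phi$: $\widetilde{\sigma}_i\widetilde{\sigma}_i^{-1} = \id$, $\widetilde{\sigma}_i\widetilde{\sigma}_j=\widetilde{\sigma}_j\widetilde{\sigma}_i$ for $|i-j|>1$, and $\widetilde{\sigma}_i\widetilde{\sigma}_{i+1}\widetilde{\sigma}_i=\widetilde{\sigma}_{i+1}\widetilde{\sigma}_i\widetilde{\sigma}_{i+1}$.

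First I would dispatch the trivial and commutation cases. The inverse $\widetilde{\sigma}_i^{-1}$ is defined so as to reverse each arcslide prescribed by $\widetilde{\sigma}_i$ and then re-swap the affected indices; composing these yields the identity morphism immediately from Definition~\ref{def:gomaps}. For commutation, $\widetilde{\sigma}_i$ modifies only the arcs $\widetilde{\gamma}_i$ and $\widetilde{\gamma}_{i+1}$, while $\widetilde{\sigma}_j$ modifies only $\widetilde{\gamma}_j$ and $\widetilde{\gamma}_{j+1}$; when $|i-j|>1$ these index sets are disjoint, and because arcslides are supported in small neighbourhoods of the shared endpoints, the two operations act on disjoint decoration data and commute on any $O$.

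The main work is the braid relation, which I would verify by case analysis on the combinatorial configuration of the three arcs $\widetilde{\gamma}_i, \widetilde{\gamma}_{i+1}, \widetilde{\gamma}_{i+2}$ at the basepoints where arcslides occur. The three pairwise cases shown in Figure~\ref{fig:moves} combine to produce a finite but non-trivial set of configurations for the triple, organised by the pattern of shared endpoints among the three arcs (all disjoint, various two-shared-endpoint patterns, triangular configurations, and so on). In each configuration, I would apply $\widetilde{\sigma}_i\widetilde{\sigma}_{i+1}\widetilde{\sigma}_i$ and $\widetilde{\sigma}_{i+1}\widetilde{\sigma}_i\widetilde{\sigma}_{i+1}$ to a generic $O$, track the resulting arcs and their indices through each intermediate step, and check the outputs agree up to isotopy relative to $\partial\widetilde{\Sigma}_\tau$. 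Proposition~\ref{prop:comp} already guarantees that the labels of the terminal graphical objects agree (both equal the Hurwitz image), so only the topological verification for the arcs themselves remains.

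The hardest part will be managing this case analysis cleanly. The key simplifying observations are that arcslides are local, so each case reduces to a planar computation in a neighbourhood of the relevant endpoints; that condition~(\ref{cond4}) of Definition~\ref{def:graphobj} rigidly constrains the cyclic orders at each basepoint, eliminating many a priori distinct pictures; and that many configurations are related by obvious symmetries. Together these should collapse the naive enumeration into a short list of genuinely distinct cases, each of which admits a direct check by drawing the before-and-after pictures of the three arcs.
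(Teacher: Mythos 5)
Your proposal follows essentially the same route as the paper: reduce to the inverse, far-commutation, and braid relations, dispatch the first two by locality of the arcslides, and verify the braid relation by a case analysis on the combinatorial configurations of three consecutive arcs (the paper's Figure~\ref{fig:cases}). The only place you are slightly too quick is the commutation case, where you claim the two operations act on disjoint decoration data: the arcs $\widetilde{\gamma}_i,\widetilde{\gamma}_{i+1}$ and $\widetilde{\gamma}_j,\widetilde{\gamma}_{j+1}$ may share basepoints $x_k$ even when $|i-j|>1$, and the paper resolves this by appealing to the ordering condition~(\ref{cond4}) at the basepoints --- an observation you invoke only later, for the braid relation.
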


\begin{proof}[Proof of Proposition~\ref{prop:braidrel}] To show that $\phi(\beta)$ is independent of the factorisation of $\beta$, we must verify that relations in the coloured braid groupoid map to relations in the mapping class groupoid: 
\begin{enumerate}
\item Far Pairs: ${\widetilde{\sigma}}_i{\widetilde{\sigma}}_{j}={\widetilde{\sigma}}_{j}{\widetilde{\sigma}}_i$ for $|i-j|>2$; and

\item Braid Relation: $\widetilde{\sigma}_i\widetilde\sigma_{i+1}\widetilde{\sigma}_i=\widetilde\sigma_{i+1}\widetilde{\sigma}_i\widetilde\sigma_{i+1}$.
\end{enumerate}

Each of these breaks into a number of distinct cases depending on the relationship of the $\widetilde{\gamma}_k$ curves involved.  We enumerate the cases, but show only a few arguments in detail.

\begin{figure}[h]
\begin{center}
\includegraphics[scale=.6]{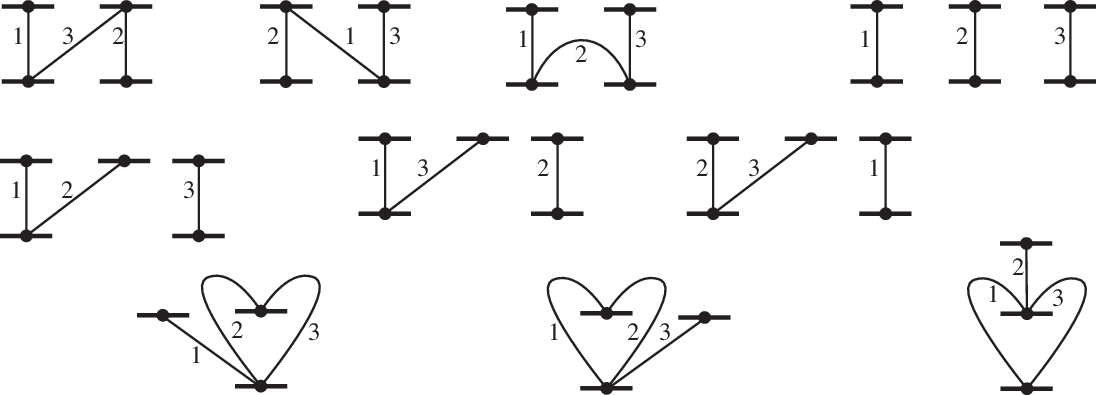}
\caption{The maps $\widetilde{\sigma}_1\widetilde\sigma_{2}\widetilde{\sigma}_1$ and $\widetilde\sigma_{2}\widetilde{\sigma}_1\widetilde\sigma_{2}$ fix the graphical object away from a neighbourhood of $\widetilde{\gamma}_1$,  $\widetilde{\gamma}_2$, and $\widetilde{\gamma}_3$. This figure shows all possible configurations of $\widetilde{\gamma}_1$,  $\widetilde{\gamma}_2$, and $\widetilde{\gamma}_3$, up to homeomorphism. } 
\label{fig:cases}
\end{center}
\end{figure}

\textbf{Far Pairs:} Fix a graphical class and the corresponding $\widetilde{\sigma}_i\widetilde\sigma_{j}$ and $\widetilde{\sigma}_j\widetilde\sigma_{i}$ maps that act on it.  We will examine how these change a representative graphical object in the class.  Each composed map is the identity away from four distinct $\Gamma$ curves: $\widetilde{\gamma}_i, \widetilde{\gamma}_{i+1}, \widetilde{\gamma}_j,\widetilde{\gamma}_{j+1}$. 

 The change to a graphical object under the action of $\widetilde{\sigma}_i$  is supported in a neighbourhood of $\widetilde{\gamma}_i \cup \widetilde{\gamma}_{i+1}$.  It almost follows that $\widetilde{\sigma}_i$ and $\widetilde{\sigma}_j$ alter disjoint subsets of the graphical object, but  the affected $\widetilde{\Gamma}$ curves may meet at some basepoints.  Nevertheless, the ordering condition at the points $\{x_i\}$ ensures that the resulting graphical objects agree there, so in fact, the relation  holds.

\textbf{Braid Relation:} We study the braid relation similarly, fixing a graphical class and compositions that act on it.  For concreteness, consider $\widetilde{\sigma}_1\widetilde\sigma_{2}\widetilde{\sigma}_1$ and $\widetilde\sigma_{2}\widetilde{\sigma}_1\widetilde\sigma_{2}$, so that a representative graphical object is fixed away from a neighbourhood of $\widetilde{\gamma}_1$,  $\widetilde{\gamma}_2$, and $\widetilde{\gamma}_3$.   The combinatorially distinct configurations in which these three curves can meet are shown in Figure~\ref{fig:cases}.

Figure~\ref{fig:cases2} demonstrates that the braid relation holds in one case, and the others follow similarly.

\begin{figure}[h]
\begin{center}
\includegraphics[scale=.7]{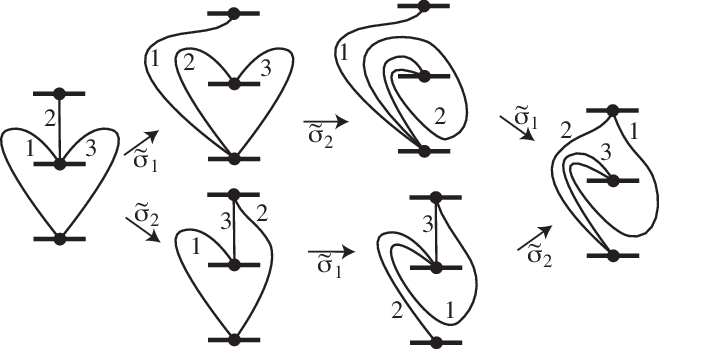}
\caption{The maps $\widetilde{\sigma}_1\widetilde\sigma_{2}\widetilde{\sigma}_1$ and $\widetilde\sigma_{2}\widetilde{\sigma}_1\widetilde\sigma_{2}$ act identically on the initial graphical object.} 
\label{fig:cases2}
\end{center}
\end{figure}
\end{proof}

Having mapped coloured braids to  morphisms in the graphical groupoid brings us significantly closer to explicitly identifying the lift of a braid.  Observe that  any graphical object is homeomorphic to some canonical graphical object:

\begin{lemma}\label{lem:lab} Let $O\in \mathcal{G}(\mu)$ be a graphical object and suppose $L(O)=\tau$.  Then there is a unique isotopy class of homeomorphisms that sends $O$ to $O_\tau$.  
\end{lemma}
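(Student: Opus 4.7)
My plan is a standard cut-and-paste argument: build the desired homeomorphism first on the one-skeleton $\widetilde{\Gamma}$ of arcs in $O$, then extend uniquely over the complementary regions using the Alexander trick. Uniqueness follows by applying the same reasoning to the composition of two candidate homeomorphisms.

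\textbf{Construction.} Because $L(O) = L(O_\tau) = \tau$, for each $i$ the arc $\widetilde{\gamma}_i$ in $O$ and the arc of the same index in $O_\tau$ have their endpoints on the same pair of indexed basepoints $\{x_j, x_k\}$ determined by $t_i = (jk)$. Condition (\ref{cond4}) of Definition~\ref{def:graphobj} forces the linear order in which arcs meet each $x_i$ to agree between the two objects. Starting from any basepoint-preserving homeomorphism of the underlying surfaces (which exists because the covering surfaces $\widetilde{\Sigma}_\rho$ are all homeomorphic via maps respecting indexed basepoints), I would transport everything to a single copy of $\widetilde{\Sigma}_\tau$ and then isotope arcs onto one another, one at a time, using the agreement on endpoints and local orderings. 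By condition (\ref{disc}), each arc system cuts $\widetilde{\Sigma}_\tau$ into $n$ discs, each containing exactly one interval of $\partial \widetilde{\Sigma}_\tau$; this unique-interval feature gives a canonical bijection between the complementary discs of $O$ and $O_\tau$, and the homeomorphism already defined on arcs together with the identity on boundary intervals prescribes a homeomorphism $\partial D_i \to \partial D_i'$ for each matched pair. The Alexander trick extends each of these to the interiors, uniquely up to isotopy rel boundary.

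\textbf{Uniqueness and anticipated obstacle.} Any two homeomorphisms $O \to O_\tau$ differ by a self-homeomorphism of $\widetilde{\Sigma}_\tau$ fixing $\partial \widetilde{\Sigma}_\tau$, the indexed basepoints, and each arc of $\widetilde{\Gamma}_\tau$ setwise. After a preliminary isotopy one may assume this self-homeomorphism fixes the arcs pointwise, so on each complementary disc it restricts to a boundary-fixing self-homeomorphism; Alexander's trick then yields an isotopy to the identity on each disc, and these glue to a global isotopy rel $\partial \widetilde{\Sigma}_\tau$. The main care-point is the combinatorial bookkeeping: I need to confirm that the arc-level matching is fully pinned down by the endpoint data in $\tau$ together with the ordering condition (\ref{cond4}), and that the single-interval condition (\ref{disc}) really does make the bijection between complementary discs unambiguous. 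Once these are in place, both existence and uniqueness are essentially formal consequences of Alexander's trick.
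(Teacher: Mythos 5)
Your proposal is correct and follows essentially the same route as the paper: match the arc systems as rigid vertex graphs using the label data and the ordering condition at the basepoints, extend over the complementary discs (the paper phrases this via the deformation retraction onto $\widetilde{\Gamma}$), and obtain uniqueness from the Alexander Method. Your version just spells out the disc-by-disc Alexander-trick argument that the paper delegates to the cited reference.
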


The lemma implies Proposition~\ref{prop:label}.

\begin{proof} Given a graphical object $(\ws_\rho, \widetilde\Gamma)$, the surface $\ws_\rho$ deformation retracts onto the rigid vertex graph $\widetilde{\Gamma}$.  We have $L\big(\ws_\rho, \widetilde{\Gamma}\big)=\tau$ exactly when $\widetilde{\Gamma}$ connects the same labeled basepoints as $\widetilde{\Gamma}_\tau$, and this identification extends to a homeomorphism of the initial $\ws_\rho$ onto $\ws_\tau$.

 The claim that this mapping class is unique is an application of the Alexander Method; see, for example,  \cite[Section 2.3]{FM}.
\end{proof}

\begin{definition}\label{def:Phi} If $\beta$ is a coloured braid that acts on the marked disc labeled by $\tau$, define  $\Phi(\beta)$ to be the homeomorphism taking $\phi(\beta) \cdot O_\tau$ to $O_{\beta\cdot \tau}$.  \end{definition} 

This definition allows us to state a stronger version of Theorem~\ref{thm:premain}:

\begin{theorem}\label{thm:main}\label{thm:lift} Let $\beta$ be a coloured braid that acts on the marked disc labeled by $\tau$.  Then as maps from $\ws_\tau$ to $D$, the following  holds:

  \[ \pi_{\beta\cdot \tau}\circ \Phi(\beta)=\beta\circ \pi_\tau.\]
When $\beta\cdot \tau=\beta$, this implies that $\Phi(\beta)=\widetilde{\beta}$.
\end{theorem}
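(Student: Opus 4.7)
The plan is to prove the identity $\pi_{\beta\cdot\tau}\circ\Phi(\beta)=\beta\circ\pi_\tau$ by induction on the length $m$ of a factorisation $\beta=\sigma_{i_m}^{\epsilon_m}\cdots\sigma_{i_1}^{\epsilon_1}$ into coloured Artin generators; by Lemma~\ref{lem:lab} the equation characterises $\Phi(\beta)$ up to a unique isotopy class, so it suffices to construct a single map satisfying both requirements. The case $m=0$ is immediate. For the base case $m=1$ I would fix an arbitrary label $\tau$ and a single generator $\sigma=\sigma_i^{\pm 1}$, then work configuration by configuration through the three pictures of Figure~\ref{fig:moves}. Using the construction of $\ws_\tau$ from Section~\ref{sec:brcov}, the component of $\pi_\tau^{-1}(\Gamma)$ that terminates at branch points is exactly the graph $\widetilde{\Gamma}_\tau$, so the content of the base case is to compute how $\sigma$ displaces the basis $\Gamma$, lift this displacement to $\ws_\tau$, and verify in each configuration that the result agrees with the combinatorial arcslide of Definition~\ref{def:gomaps}.

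For the inductive step, write $\beta=\sigma\cdot\beta'$ with $|\beta'|=m-1$ and set $\widetilde{\beta'}:=\Phi(\beta')\colon\ws_\tau\to\ws_{\beta'\cdot\tau}$. By the inductive hypothesis, $\pi_{\beta'\cdot\tau}\circ\widetilde{\beta'}=\beta'\circ\pi_\tau$ and $\widetilde{\beta'}(\phi(\beta')\cdot O_\tau)=O_{\beta'\cdot\tau}$. The key observation is that the morphism $\widetilde{\sigma}$ of Definition~\ref{def:gomaps} is defined purely in terms of local combinatorial data preserved by any surface homeomorphism — shared endpoints and the linear order of incident $\widetilde{\gamma}$-arcs at each basepoint — so $\widetilde{\beta'}$ commutes with $\widetilde{\sigma}$ and carries $\phi(\beta)\cdot O_\tau=\widetilde{\sigma}\cdot(\phi(\beta')\cdot O_\tau)$ to $\widetilde{\sigma}\cdot O_{\beta'\cdot\tau}$. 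Applying the base case to $\sigma$ over the disc labelled $\beta'\cdot\tau$ produces a homeomorphism $g\colon\ws_{\beta'\cdot\tau}\to\ws_{\beta\cdot\tau}$ sending $\widetilde{\sigma}\cdot O_{\beta'\cdot\tau}$ to $O_{\beta\cdot\tau}$ and satisfying $\pi_{\beta\cdot\tau}\circ g=\sigma\circ\pi_{\beta'\cdot\tau}$. The composite $g\circ\widetilde{\beta'}$ then sends $\phi(\beta)\cdot O_\tau$ to $O_{\beta\cdot\tau}$, hence equals $\Phi(\beta)$ by Lemma~\ref{lem:lab}, and a short chase gives $\pi_{\beta\cdot\tau}\circ\Phi(\beta)=\sigma\circ\pi_{\beta'\cdot\tau}\circ\widetilde{\beta'}=\sigma\circ\beta'\circ\pi_\tau=\beta\circ\pi_\tau$.

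The main obstacle is the base case: each of the three configurations in Figure~\ref{fig:moves} requires a separate local verification that the arcslide prescription matches the geometry of the lifted half-twist on the preimage of the standard basis, organised most cleanly via the cut-open disc model of $\ws_\tau$. Once the theorem is established, the final assertion — that $\Phi(\beta)=\widetilde{\beta}$ when $\beta\cdot\tau=\tau$ — is immediate, because $\Phi(\beta)\colon\ws_\tau\to\ws_\tau$ then satisfies the defining equation of the classical lift, and the lift is unique among mapping classes that fix the boundary and preserve the indexed basepoints.
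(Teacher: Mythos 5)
Your proposal is correct and shares the paper's overall skeleton --- reduce to a single coloured Artin generator and then compose --- but it carries out both halves differently. For the composition step, the paper first proves separately that $\Phi$ is a groupoid homomorphism (Theorem~\ref{thm:homom2}) by comparing the intersection patterns of the images of the lifted cut arcs $\widetilde{\Delta}$ with the rigid vertex graphs, and then simply multiplies the generator-level identities together; you instead run an induction whose step rests on the naturality of the arcslide morphisms under basepoint-preserving homeomorphisms, i.e.\ $f(\widetilde{\sigma}_i\cdot O)=\widetilde{\sigma}_i\cdot f(O)$ up to isotopy. That claim is true, but note it needs slightly more than the ``combinatorial data'' you cite: the result of an arcslide is a specific isotopy class, and pinning down which side of $\widetilde{\gamma}_i$ the slid arc runs along uses orientation-preservation together with the ordering condition (\ref{cond4}) of Definition~\ref{def:graphobj}; with that said, your argument is arguably cleaner than the $\widetilde{\Delta}$-argument and recovers Theorem~\ref{thm:homom2} as a byproduct. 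For the base case, the paper avoids computation in two of the three configurations by observing that the subsurface actually moved by the lifted half-twist is a disc (or a pair of discs), so uniqueness of mapping classes of discs rel boundary plus equivalence of the two coverings forces the square to commute, and it identifies the third configuration directly as a Dehn twist; your plan to lift the displacement of the basis $\Gamma$ explicitly and match it against Figure~\ref{fig:moves} is more laborious but equally valid, and is essentially forced on you anyway if you want the precise form of $\Phi(\sigma_i)$ rather than just its existence. Your handling of the final assertion (and your silent correction of the statement's typo $\beta\cdot\tau=\beta$ to $\beta\cdot\tau=\tau$) matches the paper.
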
 

The proof of Theorem~\ref{thm:main} is deferred until Section~\ref{sec:pfmain}.

\subsection{Examples}\label{sec:ex}

\begin{example}\label{ex:s3id} As a first example, we recover the familiar result that in a simple 3-fold cover of the disc branched over two points, the braid $\sigma^3$ lifts to the identity.    

Suppose that the initial labels are $(12)$ and $(23)$  and construct $(\ws_\tau, \Gamma_\tau)$ as shown in Figure~\ref{fig:s3}.  In this simple example, $\mathcal{G}(\mu)$ has three distinct graphical objects, and $\widetilde{\sigma}$ acts by cyclically permutating these.  The colouring is unnecessary, but included for convenience.

 \begin{figure}[h]
\begin{center}
\includegraphics[scale=.7]{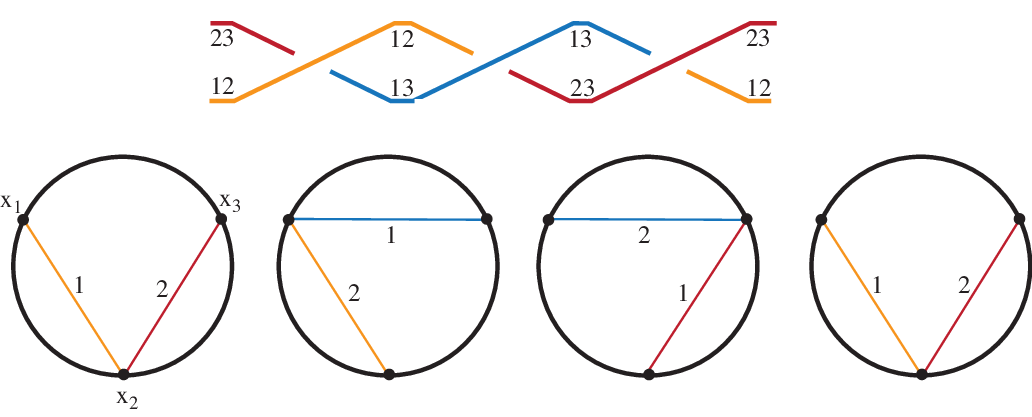}
\caption{ The morphisms $\widetilde{\sigma}$ permute the graphical objects in $(\mathcal{G}, \mathbf{x})$.   The braid $\sigma^3$ lifts to the identity. } 
\label{fig:s3}
\end{center}
\end{figure}

Since $\sigma^3\cdot \tau=\tau$, the braid $\sigma^3$  lifts to a mapping class of $\ws_\tau$.  This surface has a trivial mapping class group, so it's automatic that the lift is the identity mapping class, but this is confirmed by noting that the initial and final pictures are isotopic. 

This example establishes that $\sigma_i^3$ lifts to the identity whenever the two initial strands have distinct but non-disjoint labels, as the associated sequence of arcslides is supported (up to isotopy) in a neighbourhood of the initial $\widetilde{\gamma}_i$ and  $\widetilde{\gamma}_{i+1}$ curves.
\end{example}

\begin{example}\label{ex:ex2} Consider the braid introduced in Figure~\ref{fig:ex1}, again colouring the strands to identify the labeling transpositions more easily. As noted in the introduction, the top diagram factors as crossing between two strands with the same label followed by $\sigma_2^3$, which we have just shown lifts to the identity.  Since the strands that cross first have the same label, the braid should lift to a Dehn twist.  However, this factorisation is not easily seen from the lower diagram for the same braid,  which is reproduced below.  Setting $\tau= \big( (12), (12), (23)\big)$, we examine the action of $\phi(\beta)$ on $O_\tau$.  

\begin{figure}[h]
\begin{center}
\includegraphics[scale=.5]{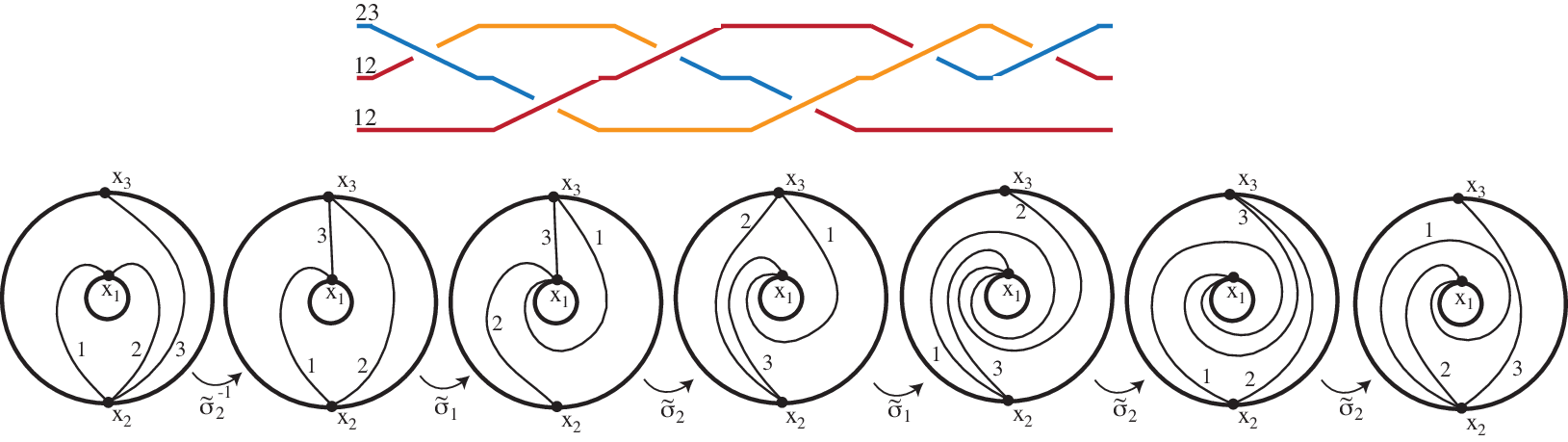}
\caption{ The braid above acts on the graphical object on the left as shown.} 
\label{fig:morph}
\end{center}
\end{figure}

In the braid diagram, each pair of crossing strands is labeled by a pair of distinct but not disjoint transpositions, so each $\widetilde{\sigma}_i$ is a single  arcslide.  After performing each one in turn, the right-most surface is $\phi(\beta) \cdot O_\tau$. It is easy to see that the final picture maps to the initial picture of  $O_\tau$ via a positive Dehn twist.  This yields the expected result that the lift of the generator $\sigma_i$ is a positive Dehn twist. 

\end{example}

\begin{example} Remark~\ref{rmk:build} identified $\ws_\tau$  as a surface neighbourhood of the rigid vertex graph $\widetilde{\Gamma}_\tau$.  This perspective makes it easy to construct braids  that lift to certain homeomorphisms.  For example, suppose that consecutive $\widetilde{\gamma}_i, \widetilde{\gamma}_{i+1}, \dots, \widetilde{\gamma}_{i+k}$ form a cycle when traversed in index order.  An indicative example is shown in the first picture of Figure~\ref{fig:ex5}. Then Dehn twists around this simple closed curve are achieved by lifting the following braid: 
\[ \sigma_i \sigma_{i+1} \dots \sigma_{i+k-2} \sigma_{i+k-1}^{\pm n}\sigma_{i+k-2}^{-1}\dots \sigma_i^{-1}.\]

Here, $\pm n$ is chosen based on the number and sign of the desired Dehn twists.

\begin{figure}[h]
\begin{center}
\includegraphics[scale=.65]{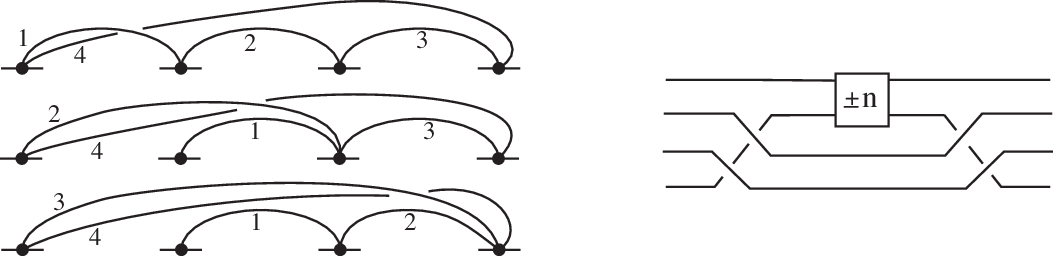}
\caption{Left: Sliding $\widetilde{\gamma}_1$ over successively indexed arcs produces a pair of adjacent strands with the same colour.   Right: Composing the left-hand preliminary braid with $\sigma_3^{\pm }$ and then the inverse of the preliminary braid lifts to a Dehn twist around the cycle formed by the initial $\widetilde{\gamma}_1,\widetilde{\gamma}_2, \widetilde{\gamma}_3$ and $\widetilde{\gamma}_4$. } 
\label{fig:ex5}
\end{center}
\end{figure}

In this case, the preliminary braid produces two $\widetilde{\Gamma}$ curves labeled by the same transposition, so the existence of Dehn twists is clear from the braid diagram even without computing the core of the twisting annulus.  In contrast, one may also construct coloured braids that ``hide'' the twists.  When $d\geq 3$, Etnyre-Casals have noted that every braid is represented by a braid diagram where no crossing strands have the same label \cite{CasEt}. (They state that every crossing may be assumed to have three distinct labels, but when the degree of the cover is greater than three, crossings between strands labeled with disjoint transpositions are also required.) The procedure for achieving this is quite concrete: whenever a crossing between two strands with the same label occurs, there is a strand with a different label that can be pulled through the crossing. For such diagrams, each $\sigma_i$ lifts to either an arcslide or a label swap, so at a cost of more crossings, the third case in Figure~\ref{fig:moves} may be omitted entirely.  

\end{example}

\subsection{Proof of Theorem~\ref{thm:main}}\label{sec:pfmain}

This section discusses the relationship between the two groupoids associated to a fixed total monodromy. 

\begin{theorem}\label{thm:homom2} The map $\Phi: \text{ColB}_{n,d}\rightarrow \mathcal{M}(\mu)$ is a groupoid homomorphism.
\end{theorem}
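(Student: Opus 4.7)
The plan is to reduce Theorem~\ref{thm:homom2} to two facts already in hand: Proposition~\ref{prop:braidrel}, which says that $\phi:\text{ColB}_{n,d}\to \mathcal{G}(\mu)$ is well-defined and respects composition, and Lemma~\ref{lem:lab}, which supplies the uniqueness statement. First I would dispatch well-definedness: Definition~\ref{def:Phi} exhibits $\Phi(\beta)$ as the unique mapping class carrying $\phi(\beta)\cdot O_\tau$ to $O_{\beta\cdot\tau}$, and since $\phi(\beta)$ is independent of the factorisation of $\beta$, so is $\Phi(\beta)$. Similarly, the identity braid at $\tau$ is sent to the identity graphical morphism, so $\Phi(\mathrm{id}_\tau)$ is the unique homeomorphism of $\ws_\tau$ carrying $O_\tau$ to itself, which is the identity mapping class.

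The core of the argument is compositionality. Suppose $\beta_1\in \mathrm{Hom}(\tau_0,\tau_1)$ and $\beta_2\in \mathrm{Hom}(\tau_1,\tau_2)$ are composable coloured braids. I would establish the following intertwining property: for any homeomorphism $H:\ws_\rho\to \ws_{\rho'}$ between covers that preserves the indexed basepoints and carries a graphical object $O$ supporting $\widetilde{\sigma}_i$ to a graphical object $H(O)$,
\[ H(\widetilde{\sigma}_i\cdot O)=\widetilde{\sigma}_i\cdot H(O). \]
This is intuitively obvious because Definition~\ref{def:gomaps} specifies $\widetilde{\sigma}_i$ purely through arcslides of $\widetilde{\gamma}_{i+1}$ along $\widetilde{\gamma}_i$ together with a swap of the two indices; both operations depend only on the combinatorial pattern in which the indexed arcs meet the indexed basepoints, and any index-preserving homeomorphism transports this pattern verbatim.

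With the intertwining property in hand, the composition identity is a short calculation. Starting from $\phi(\beta_2\beta_1)\cdot O_{\tau_0}=\phi(\beta_2)\cdot(\phi(\beta_1)\cdot O_{\tau_0})$ from Proposition~\ref{prop:braidrel}, applying $\Phi(\beta_1)$ and using the intertwining property yields
\[ \Phi(\beta_1)\bigl(\phi(\beta_2\beta_1)\cdot O_{\tau_0}\bigr)=\phi(\beta_2)\cdot\Phi(\beta_1)\bigl(\phi(\beta_1)\cdot O_{\tau_0}\bigr)=\phi(\beta_2)\cdot O_{\tau_1}. \]
Applying $\Phi(\beta_2)$ next sends this to $O_{\tau_2}$ by Definition~\ref{def:Phi}. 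Thus $\Phi(\beta_2)\circ\Phi(\beta_1)$ carries $\phi(\beta_2\beta_1)\cdot O_{\tau_0}$ to $O_{\tau_2}$, and the uniqueness clause of Lemma~\ref{lem:lab} forces $\Phi(\beta_2)\circ\Phi(\beta_1)=\Phi(\beta_2\beta_1)$.

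The main obstacle is the intertwining property. It looks like a formality, but making it airtight requires checking that in each of the three geometric cases of Figure~\ref{fig:moves} (disjoint, two shared endpoints, one shared endpoint) the combinatorial rule defining $\widetilde{\sigma}_i$ really is transported by an index-preserving homeomorphism. Since the arcslides are performed in a neighbourhood of $\widetilde{\gamma}_i\cup\widetilde{\gamma}_{i+1}$ and the index swap is purely bookkeeping, the verification is local and case-by-case but not deep; once it is written out, the rest of the proof is a one-line application of Lemma~\ref{lem:lab}.
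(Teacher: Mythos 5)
Your proof is correct, but it takes a genuinely different route from the paper's. The paper reduces to a pair of generators and then compares $\Phi(\sigma_{i_2}\sigma_{i_1})$ with $\Phi(\sigma_{i_2})\circ\Phi(\sigma_{i_1})$ directly, by showing that both send the lifted cut system $\widetilde{\Delta}$ to isotopic configurations (the point being that both sides are governed by the same arcslides and index swaps, so the intersection pattern of the image of $\widetilde{\Delta}$ with $\widetilde{\Gamma}_{\sigma_{i_2}\sigma_{i_1}\cdot\tau}$ is the same); an Alexander-method argument then identifies the two mapping classes. You instead show that the composite $\Phi(\beta_2)\circ\Phi(\beta_1)$ satisfies the defining property of $\Phi(\beta_2\beta_1)$ --- it carries $\phi(\beta_2\beta_1)\cdot O_{\tau_0}$ to $O_{\tau_2}$ --- and then invoke the uniqueness clause of Lemma~\ref{lem:lab}. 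The new ingredient you must supply is the intertwining identity $H(\widetilde{\sigma}_i\cdot O)=\widetilde{\sigma}_i\cdot H(O)$ for index-preserving homeomorphisms; this is present only implicitly in the paper, in the remark that the intermediate identification of $\phi(\sigma_{i_1})\cdot\widetilde{\Gamma}_\tau$ with $\widetilde{\Gamma}_{\sigma_{i_1}\cdot\tau}$ does not affect the subsequent arcslides. Your packaging is more formal and avoids introducing the auxiliary arcs $\widetilde{\Delta}$ altogether, at the cost of having to verify naturality of the three local models of Figure~\ref{fig:moves} under homeomorphism (and to note that an index- and orientation-preserving homeomorphism sends graphical objects to graphical objects with the same label, so that the same $\widetilde{\sigma}_i^\tau$ applies on both sides). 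Both arguments ultimately rest on the same rigidity statement, since the uniqueness in Lemma~\ref{lem:lab} is itself an application of the Alexander method.
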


\begin{proof} 
Suppose that $\sigma_{i_2}\sigma_{i_1}$ is a coloured braid acting on the disc with initial labels $\tau$.  Recall that $\Phi(\sigma_{i_1})$ is a homeomorphism from $\ws_\tau$ to $\ws_{\sigma_{i_1}\cdot \tau}$ and $\Phi(\sigma_{i_2})$ is a homeomorphism from $\ws_{\sigma_{i_1}\cdot \tau}$ to $\ws_{\sigma_{i_2}\sigma_{i_1}\cdot \tau}$. 

Since $\Phi(\beta)$ is independent of the factorisation of $\beta$ as a product of generators, it suffices to show that for any Artin generators or their inverses, \begin{equation}\label{eq}\Phi(\sigma_{i_2}\sigma_{i_1})=\Phi(\sigma_{i_2})\circ\Phi(\sigma_{i_1}).\end{equation}
 
Furthermore, we must check that a trivial coloured braid maps to a trivial mapping class in $\mathcal{M}(\mu)$.  However, this follows immediately from Proposition~\ref{prop:braidrel}, which proved that trivial braids lift to identity morphisms in $\mathcal{G}(\mu)$.  Since $\phi(\beta) \cdot O_\tau$ is isotopic to $O_\tau$ for any trivial $\beta$, we see that $\Phi(\beta)$ is the identity morphism in $\mathcal{M}$.

A standard way to compare two homeomorphisms of a surface is to see that they act the same way on a sufficiently complicated set of curves.  In the present situation, we claim that the left- and right-hand sides of Equation~\ref{eq} act the same way on the lifts of the cut curves $\Delta$.  As with $\pi^{-1}(\Gamma)$, we discard components of $\pi^{-1}(\Delta)$ that do not meet at branch points, and we denote the remaining set of properly embedded arcs by $\widetilde{\Delta}$.  

Each of the three homeomorphisms in Equation~\ref{eq} is defined as a map sending a neighbourhood of one rigid vertex graph to a neighbourhood of another.  On the left-hand side, $\widetilde{\Gamma}_\tau$ is first altered by the $\mathcal{G}$-morphism $\phi(\sigma_{i_2}\sigma_{i_1})$ and then the surface neighbourhood of this new rigid vertex graph is mapped to $\widetilde{\Gamma}_{\sigma_{i_2}\sigma_{i_1}\cdot \tau}$ by $\Phi(\sigma_{i_2}\sigma_{i_1})$.  The intersection pattern between $\widetilde{\Gamma}_{\sigma_{i_2}\sigma_{i_1}\cdot \tau}$ and $\widetilde{\Delta}$ on $\ws_\tau$  is dictated by the specific arcslides and index swaps of $\phi(\sigma_{i_2}\sigma_{i_1})$, and this intersection pattern determines $\Phi(\sigma_{i_2}\sigma_{i_1})(\widetilde{\Delta})$.  

The right-hand side of Equation~\ref{eq} is a composition of two homeomorphisms, but the key observation is that the same arcslides and index swaps are applied to the rigid vertex graphs as in the first case.  There is an intermediate step that identifies a neighbourhood of $\phi(\sigma_{i_1})\cdot \widetilde{\Gamma}_\tau$ with $\widetilde{\Gamma}_{\sigma_{i_1}\cdot \tau}$ on $\widetilde{\Sigma}_{\sigma_{i_1}\cdot \tau}$, but this doesn't affect how the set of arcslides and index swaps associated to $\phi(\sigma_{i_2})$ change the intersection pattern with $\widetilde{\Delta}$.  Since  $\widetilde{\Gamma}_{\sigma_{i_2}\sigma_{i_1}\cdot \tau}$ has the same intersections with  $\Phi(\sigma_{i_2})\circ \Phi(\sigma_{i_1})(\widetilde{\Delta})$ and with $\Phi(\sigma_{i_2}\sigma_{i_1})(\widetilde{\Delta})$, it follows that the two images of $\widetilde{\Delta}$ are istotopic on $\ws_{\sigma_{i_1}\sigma_{i_2}\cdot \tau}$.  Hence, Equation~\ref{eq} holds.

\end{proof}

The map $\phi$ is defined on generators of the coloured braid group and extends to products by sequential application; $\Phi$ is defined so that the diagram relating decorated surfaces shown on  the left commutes. Forgetting the decorations yields the diagram on the right.

\[
\begin{minipage}{0.48\textwidth}
\centering
\begin{tikzcd}
(\widetilde{\Sigma}_\tau, \widetilde{\Gamma}_\tau) \arrow[r, "\phi(\beta)"] \arrow[d, "\pi_\tau"]
& (\widetilde{\Sigma}_\tau, \phi(\beta)\cdot \widetilde{\Gamma}_\tau) \arrow[r, "\Phi(\beta)"]
& (\widetilde{\Sigma}_{\beta\cdot\tau}, \widetilde{\Gamma}_{\beta\cdot \tau}) \arrow[d, "\pi_{\beta\cdot \tau}"] \\
(D^2, \tau, \Gamma) \arrow[rr, "\beta"]
&& (D^2, \beta\cdot\tau, \Gamma)
\end{tikzcd}
\end{minipage}
\hspace{1.5cm}
\begin{minipage}{0.48\textwidth}
\centering
\begin{tikzcd}
\widetilde{\Sigma}_\tau \arrow[r, "\Phi(\beta)"] \arrow[d, "\pi_\tau"]
& \widetilde{\Sigma}_{\beta\cdot\tau} \arrow[d, "\pi_{\beta\cdot \tau}"] \\
(D^2, \tau) \arrow[r, "\beta"]
& (D^2, \beta\cdot\tau)
\end{tikzcd}
\end{minipage}
\]

\begin{proof}[Proof of Theorem~\ref{thm:main}]
It remains to show that $\pi_{  \tau} \circ \Phi(\beta)=\beta \circ \pi_\tau$ for any liftable braid $\beta$ acting on the disc labeled with $\tau$. 
In light of Theorem~\ref{thm:homom2}, we will check the commutation equation \[\pi_{\sigma_i \cdot \tau} \circ \Phi(\sigma_i)=\sigma_i \circ \pi_\tau\] for all coloured Artin generators $\sigma_i$.  Since any coloured braid can be factored as a product of generators and the lift is independent of this factorisation, this suffices to prove that $\Phi(\beta)$ is the lift of $\beta$ in the sense of Definition~\ref{def:genlift}.

First, suppose that $\sigma_i$ is a half twist between marked points labeled with distinct but non-disjoint labels.  The covering surface splits into a subsurface preserved by the braid action and one that is altered, but the latter part is topologically a disc.  Up to isotopy, there is a unique way to map this disc in the initial covering surface to the corresponding disc in the final covering surface, so the fact that the coverings are equivalent implies that the diagram above commutes for such half twists.   

The case of disjoint labels is similar, but with the altered subsurface now a pair of discs.

Finally, suppose $\sigma_i$ exchanges marked points labeled with the same transposition.  In this case, $\sigma_i$ is itself liftable, so we consider the fixed surface $\ws_\tau$.   Following Definition~\ref{def:gomaps}, $\phi(\sigma_i)=\widetilde{\sigma_i}$ arcslides both ends of $\widetilde{\gamma_i}$ along $\widetilde{\gamma}_{i+1}$.  To recover a surface decorated with curves in the original isotopy class requires a Dehn twist along the core of $\widetilde{\gamma}_{i}\cup \widetilde{\gamma}_{i+1}$, as seen in the right-most picture in Figure~\ref{fig:moves}.  Thus $\Phi(\beta)$ is a right-handed Dehn twist along the simple closed curve covering  the line segment between $p_i$ and $p_{i+1}$, recovering the standard fact that a half twist lifts to the double cover as a Dehn twist.

\end{proof}

\section{Groupoids and CW-complexes}

There is an obvious way to build a $2$-dimensional CW complex from a groupoid, given sets of generating morphisms and relations: assign a $0$-cell to each object, a $1$-cell to each generating morphism, and attach a $2$-cell along each loop giving a relation among morphisms.    We say that complex built this way \textit{represents} the original groupoid. In this final section, we construct   CW-complexes representing the groupoids $\mathcal{G}(\mu)$ and $\mathcal{M}(\mu)$ and we note some of their topological properties.  Without loss of generality, assume that $\mu$ is the total monodromy of the distinguished label $\tau_0$.

In the case of the graphical groupoid $\mathcal{G}(\mu)$, the $0$-cells are graphical objects and we attach $2(k-1)$ $1$-cells corresponding to the $\widetilde{\sigma}_i^\pm$ to each $0$-cell.  Every path in this $1$-skeleton is a morphism.   This CW-complex has $|\mathcal{T}|$  connected components, since each morphism alters the decorations but not the underlying surface.  Choose the connected component containing the distinguished label $\tau_0$ and denote the it by $X^1_\mathcal{G}$.  

Next, consider the CW complex associated to the mapping class groupoid $\mathcal{M}(\mu)$.  The vertex set is indexed by $\mathcal{T}(\mu)$ and we may choose an analogous generating set for the morphisms.  

\begin{lemma} The homeomorphisms $\Phi(\sigma_i)$ generate the morphisms in $\mathcal{M}(\mu)$.  
\end{lemma}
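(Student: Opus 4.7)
The plan is to prove that $\Phi\colon\text{ColB}_{n,d}\to\mathcal{M}(\mu)$ is surjective as a groupoid homomorphism; since Theorem~\ref{thm:homom2} tells us $\Phi(\beta)$ factors as a product of $\Phi(\sigma_i)^{\pm 1}$ for any coloured braid $\beta$, surjectivity of $\Phi$ is exactly the claim that the $\Phi(\sigma_i)$ generate the morphisms of $\mathcal{M}(\mu)$. Given an arbitrary morphism $f\colon\ws_{\tau_1}\to\ws_{\tau_2}$ in $\mathcal{M}(\mu)$, the strategy is to produce a preimage $\beta_1\beta_0$ by splitting the construction into an ``object-connecting'' part and a ``self-morphism'' part.

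\textbf{Step 1: Transitivity on objects.} The first ingredient is the transitivity of the Hurwitz action on $\mathcal{T}(\mu)$: any two tuples in $\mathcal{T}(\mu)$ share total monodromy $\mu$, and the classical Clebsch--Hurwitz theorem (see e.g.\ \cite{MP, BE}) provides a coloured braid $\beta_0$ with $\beta_0\cdot\tau_1=\tau_2$. Then $\Phi(\beta_0)$ is a morphism from $\ws_{\tau_1}$ to $\ws_{\tau_2}$ in $\mathcal{M}(\mu)$, so $g:=f\circ\Phi(\beta_0)^{-1}$ is a self-homeomorphism of $\ws_{\tau_2}$, i.e.\ an element of $\text{Mod}(\ws_{\tau_2})$.

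\textbf{Step 2: Surjectivity onto each vertex group.} The second ingredient is the surjectivity of the classical lifting homomorphism $LB_{n,d}\to\text{Mod}(\ws_{\tau_2})$. This is the content of the Mulazzani--Piergallini and Wajnryb--Wi\'sniowska-Wajnryb generating results recalled in Section~\ref{sec:braids}: $LB_{n,d}$ is generated by liftable twists along a finite system of arcs on the marked disc, and each such twist lifts to a Dehn twist (or a power thereof) along a simple closed curve in $\ws_{\tau_2}$. These curves form a configuration whose Dehn twists generate $\text{Mod}(\ws_{\tau_2})$ by the standard generation results for mapping class groups of surfaces with boundary. Choose a liftable coloured braid $\beta_1$, acting on the disc labeled $\tau_2$, whose classical lift $\widetilde{\beta_1}$ equals $g$. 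By Theorem~\ref{thm:main}, $\Phi(\beta_1)=\widetilde{\beta_1}=g$.

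\textbf{Step 3: Concatenation.} The braids $\beta_0$ and $\beta_1$ are composable, so applying Theorem~\ref{thm:homom2} to $\beta_1\beta_0$ gives
\[
\Phi(\beta_1\beta_0)=\Phi(\beta_1)\circ\Phi(\beta_0)=g\circ\Phi(\beta_0)=f,
\]
exhibiting $f$ as a value of $\Phi$ and hence as a word in the $\Phi(\sigma_i)^{\pm 1}$.

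The main obstacle is Step~2: confirming that the cited generating results actually yield a surjection of the liftable braid group onto the \emph{full} mapping class group $\text{Mod}(\ws_{\tau_2})$, rather than merely a subgroup. This reduces to checking that the explicit system of arcs in \cite{WWW} lifts to a Dehn-twist generating set for a surface with boundary, which is standard but worth citing precisely. Steps~1 and 3 are essentially bookkeeping within the groupoid and follow directly from the functoriality of $\Phi$ established in Theorem~\ref{thm:homom2}.
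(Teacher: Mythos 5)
Your proposal is correct and follows essentially the same route as the paper: factor an arbitrary morphism $\ws_{\tau_1}\to\ws_{\tau_2}$ through the lift of any coloured braid connecting the two labels, then absorb the remaining self-homeomorphism using surjectivity of the classical lifting homomorphism onto $\text{Mod}(\ws_{\tau_2})$. The only difference is that the paper disposes of your flagged ``main obstacle'' in Step~2 by directly citing the known surjectivity of the liftable braid group onto the mapping class group for covers of degree at least $3$ (Theorem~B of the reference it invokes), rather than re-deriving it from the Wajnryb--Wi\'sniowska-Wajnryb generating set.
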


\begin{proof} Since we have assumed that the degree of the cover is at least $3$, Theorem B in \cite{MAM} implies that the lifting map from the liftable braid group to the mapping class group of a fixed covering surface is surjective.  Here, we show that the lifting map from the coloured braid groupoid to the mapping class groupoid is also surjective.  

Let $\Psi$ be an arbitrary homeomorphism $\ws_\tau$ to $\ws_\rho$, for $\tau, \rho\in \mathcal{T}$.  Any coloured braid $\beta$ with initial labels $\tau$ and final labels $\rho$ lifts to a homeomorphism from  $\ws_\tau$ to $\ws_\rho$.  Then $\Psi=h_\rho \Phi(\beta) h_\tau$ for $h_*\in \text{Mod}(\ws_*)$.  Since each $h_*$ is the lift of some coloured braid, the result holds.  
\end{proof}

Let $X^1_\mathcal{M}$ denote the CW-complex built by assigning edges to the generating morphisms $\Phi(\sigma_i)$.  Here, as below, any path in $X^1_\mathcal{G}$ or $X^1_\mathcal{M}$ can be identified with a word in the Artin generators for the braid group, and once the initial $0$-cell is fixed, there is a well defined path associated to any braid. 

In both $\mathcal{G}(\mu)$ and $\mathcal{M}(\mu)$, the relations among morphisms  were defined by some global property, but  a more explicit description is required to attach $2$-cells to the CW-complexes.  We will introduce three types of relations, starting with the braid relations.  First, attach a $2$-cell to $X^1_*$ for each edge path labeled by a braid relation.

 Recall from Section~\ref{sec:brcov} that an arc $C$ on a marked and labeled disc is Type $i$ if $i$ is the minimal liftable power of the twist $\sigma_C$.   When $C$ is an elementary arc, its type can be read off easily from the labels on its endpoints.    Each elementary arc $C$ connecting branch points $i$ and $i+1$ labeled by disjoint transpositions is Type 2, and  $\sigma^2_C=\sigma^2_i$ lifts to the identity mapping class. Each elementary arc $C$ connecting branch points $i$ and $i+1$ labeled by distinct but non-disjoint transpositions is Type 3, and $\sigma^3_C=\sigma^3_i$ lifts to the identity mapping class. 

 This dictates the second class of relations: for each interval connecting branch points $i$ and $i+1$ labeled by disjoint transpositions, attach a $2$-cell along the loop $\sigma_i^2$ ; and  for each interval connecting branch points $i$ and $i+1$ labeled by distinct but non-disjoint transpositions, attach a $2$-cell along the loop $\sigma_i^3$.

 When $C$ is not an elementary arc, we may nevertheless identify its type: 
 
 \begin{lemma} The arc $C$ on the disc labeled by $\tau$ is a Type $i$ arc if there is some coloured braid $\beta$ with the property that $\beta\cdot C$ is an elementary Type $i$ arc on the disc labeled by $\beta\cdot \tau$.
\end{lemma}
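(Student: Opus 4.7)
The plan is to exploit the identity $\sigma_{\beta\cdot C} = \beta\,\sigma_C\,\beta^{-1}$ at the level of underlying uncoloured braids in $B_n$. This is a standard consequence of the description of $\sigma_C$ as a half twist supported on a neighbourhood of $C$: conjugating any mapping class by a homeomorphism produces the analogous mapping class supported on the image of that neighbourhood. So the first step in the proof is to record this identity and verify that in the coloured groupoid, the underlying word in $B_n$ on the right agrees with that on the left, once one tracks which initial labels appear on each factor.

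The second step is to translate liftability into a statement about labels. By the discussion in Section~\ref{sec:braids}, the power $\sigma_C^j$ coloured with initial labels $\tau$ is classically liftable to a self-homeomorphism of $\ws_\tau$ exactly when $\sigma_C^j\cdot\tau = \tau$. Likewise, $\sigma_{\beta\cdot C}^j$ coloured with initial labels $\beta\cdot\tau$ is classically liftable with respect to $\ws_{\beta\cdot\tau}$ exactly when $\sigma_{\beta\cdot C}^j\cdot(\beta\cdot\tau) = \beta\cdot\tau$. Plugging in the conjugation identity, the latter condition becomes $\beta\,\sigma_C^j\cdot\tau = \beta\cdot\tau$, and since the Hurwitz action of $\beta$ is a bijection on the label set $\mathcal{T}$, this is equivalent to $\sigma_C^j\cdot\tau = \tau$.

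Consequently, the set of exponents $j$ for which $\sigma_C^j$ is liftable with respect to $\ws_\tau$ is equal to the set of exponents $j$ for which $\sigma_{\beta\cdot C}^j$ is liftable with respect to $\ws_{\beta\cdot\tau}$. In particular, their minima coincide. So if $\beta\cdot C$ is an elementary Type $i$ arc on the disc labeled by $\beta\cdot\tau$, meaning $i$ is the minimal liftable power of $\sigma_{\beta\cdot C}$, then $i$ is also the minimal liftable power of $\sigma_C$, and $C$ is Type $i$ as claimed. (The argument is symmetric in $\beta$ and $\beta^{-1}$, so the converse implication holds as well, though the lemma only asks for one direction.)

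The main obstacle I anticipate is formalising the conjugation identity inside the coloured groupoid, since the three factors $\beta$, $\sigma_C$, $\beta^{-1}$ each carry specified initial labels and the composition must be composable as a morphism in $\text{ColB}_{n,d}$. The clean way around this is to observe that a coloured braid is determined by its initial label together with its underlying element of $B_n$; once the initial label $\beta\cdot\tau$ is fixed on the right-hand composition, the identity $\sigma_{\beta\cdot C} = \beta\,\sigma_C\,\beta^{-1}$ in $B_n$ automatically promotes to an equality of coloured braids with matching terminal labels, and the label-level argument above then goes through unchanged.
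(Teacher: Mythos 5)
Your proposal is correct and follows essentially the same route as the paper: the paper's proof is the one-line observation that conjugation preserves liftability, so the minimal liftable powers of $\sigma_C$ and of the conjugate elementary twist agree. Your argument simply unpacks why conjugation preserves liftability (via the identity $\sigma_{\beta\cdot C}=\beta\sigma_C\beta^{-1}$ and the characterisation of liftability as the Hurwitz action fixing the label), which is a faithful elaboration of the same idea.
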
 

This criterion recovers Lemma 2.3 of \cite{MP}.  A braid $\beta$ that takes an arbitrary arc to an elementary one will not generally be liftable, but in the groupoid framework  it is natural to view a twist around a complicated arc as the conjugate of a twist around an elementary arc in a differently labeled disc.

\begin{proof} Conjugation preserves liftability, so the minimal liftable power of the twist $\sigma_C$ agrees with the minimal liftable power of the elementary twist $\beta^{-1}\sigma_C\beta$. 
\end{proof}

We have already attached a $2$-cell to the minimal liftable power of the twist around each elementary Type 2 and Type 3 arc, so for any Type 2 or Type 3 $C$ in $\mathcal{W}$, the minimal liftable power of $\sigma_C$ defines a contractible loop in each of $X_\mathcal{G}$ and $X_\mathcal{M}$.

The final set of relations reflect the complexity of surface mapping class groups, rather than being an artifact of our particular construction. Recall that for each equivalence class of coverings,  $\mathcal{W}$ is an explicit set of  arcs on the disc labeled with $\tau_0$ with the property that liftable powers of twists $\tau_C$ generate LB$_n$.

\begin{definition} Given a Type 1 arc $C\in \mathcal{W}$, let $\widetilde{\sigma}_C$ be the Dehn twist lifting $\sigma_C$.  
\end{definition}

Fix an elementary Type 1 arc $C$ on any labeled disc and consider the edge in $X^1_*$ associated to the twist $\sigma_C$.  Since the lift of $\widetilde{\sigma}_C$ is a homeomorphism, this edge is a loop in $X_\mathcal{M}$.  On the other hand, the lift to $\mathcal{G}$ induces a pair of arcslides which change the isotopy class of the decoration on the graphical object.  Hence the edge associated to $\sigma_C$ connects distinct vertices in $X_\mathcal{G}$.

Fix a class in $\text{Mod}(\ws_{\tau_0})$.  Any braid lifting to this class can be written as a product of liftable powers of $\sigma_C$ for $C \in \mathcal{C}(\tau_0)$, but powers of Type 2 and Type 3 arcs lift to the identity mapping class.  It follows that the Dehn twists $\widetilde{\sigma}_C$ generate $\text{Mod}(\ws_{\tau_0})$.  Since the mapping class group of a surface admits a finite presentation, there is a finite set of relations $R$ such that 
\[ \text{Mod}(\ws_{\tau_0})\cong \langle \{\widetilde{\sigma}_C\}_{C\in \mathcal{W}} \ | \ R\rangle.\]

Attach a $2$-cell to $X^1_*$ for each relation in $R$ in such a presentation.

Denote by $X_\mathcal{G}$ and $X_\mathcal{M}$ the $2$-dimensional CW complexes constructed from their $1$-skeleta by attaching 2-cells for braid relations; Type 2 and Type 3 twists; and mapping class relations $R$.   This is admittedly an indirect construction, but making it more explicit in any generality would require lengthy computation for minimal benefit.   Example~\ref{ex:complex} below shows some indicative $2$-cell attachments in a specific case.

\begin{theorem}\label{thm:cover} For $\ws$ the branched cover of the marked disc labeled by $\tau_0$, we have \[ \pi_1(X_\mathcal{M}, \tau_0)\cong\text{Mod}(\ws).\]  Furthermore, the CW-complex $X_\mathcal{G}$ is the universal cover of $X_\mathcal{M}$.  
\end{theorem}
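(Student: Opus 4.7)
The plan is to establish both assertions via covering space theory: I will define the natural projection $p\colon X_\mathcal{G}\to X_\mathcal{M}$, verify it is a covering map of $2$-complexes, and then show that $\text{Mod}(\widetilde{\Sigma})$ acts freely and transitively on the fibre $p^{-1}(\tau_0)$ by deck transformations. Since $X_\mathcal{G}$ is connected by construction, this will simultaneously identify $\pi_1(X_\mathcal{M},\tau_0)$ with $\text{Mod}(\widetilde{\Sigma})$ and exhibit $X_\mathcal{G}$ as the universal cover.

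First I would define $p$ on $0$-cells by $(\widetilde{\Sigma}_\tau,\widetilde{\Gamma})\mapsto \tau$ and on $1$-cells by $\widetilde{\sigma}_i^{\pm 1}\mapsto \Phi(\sigma_i)^{\pm 1}$. At every vertex of $X_\mathcal{G}$ there is exactly one incident edge per coloured Artin generator and its inverse, corresponding bijectively to the edges at the image vertex of $X_\mathcal{M}$, so $p$ restricts to a local homeomorphism of $1$-skeleta. I would then confirm that each class of $2$-cell in $X_\mathcal{M}$ lifts to a $2$-cell in $X_\mathcal{G}$: braid relations lift by Proposition~\ref{prop:braidrel}; Type~$2$ and Type~$3$ elementary twist loops lift because the corresponding sequences of arcslide-index-swap moves restore the graphical object up to isotopy (generalising Example~\ref{ex:s3id}); and a relation $w\in R$ among Dehn twists $\widetilde{\sigma}_C$ lifts because $\Phi(w)=\mathit{id}$, combined with the uniqueness clause of Lemma~\ref{lem:lab}, forces $\phi(w)\cdot O_{\tau_0}=O_{\tau_0}$. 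Hence $p$ is a covering map of $2$-complexes.

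Next I would analyse the monodromy action of $\pi_1(X_\mathcal{M},\tau_0)$ on $p^{-1}(\tau_0)$. The fibre is the set of isotopy classes of graphical objects $O$ with $L(O)=\tau_0$ in the chosen component, and the loop represented by a coloured braid $\beta$ sends $O$ to $\phi(\beta)\cdot O$. Transitivity follows from connectedness of $X_\mathcal{G}$: any $O$ in the fibre is $\phi(\beta)\cdot O_{\tau_0}$ for some edge path, and $L(O)=\tau_0$ forces $\beta\cdot\tau_0=\tau_0$. Freeness rests on the equivalence $\phi(\beta)\cdot O_{\tau_0}=O_{\tau_0}\iff \Phi(\beta)=\mathit{id}$, which is immediate from Lemma~\ref{lem:lab}: if the decorations coincide, the identity witnesses the isotopy, and conversely, $\Phi(\beta)$ being the identity mapping class forces its image $\phi(\beta)\cdot O_{\tau_0}$ to agree with its target $O_{\tau_0}$. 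A free, transitive action on the fibre of a connected covering identifies it as the universal cover and identifies the deck group with $\pi_1(X_\mathcal{M},\tau_0)$. The final identification of this deck group with $\text{Mod}(\widetilde{\Sigma})$ is via the canonical lifting map $\beta\mapsto \Phi(\beta)$, surjective by the preceding Lemma and injective because its kernel is exactly the stabiliser of $O_{\tau_0}$, which we just showed is trivial.

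The principal obstacle is verifying that the three classes of $2$-cells attached to $X_\mathcal{M}$ (braid relations, Type~$2$/$3$ annihilations, and the relations $R$) are jointly sufficient for $\Phi$ to descend to a well-defined map on $\pi_1(X_\mathcal{M},\tau_0)$; equivalently, that the kernel of the map from closed edge-paths based at $\tau_0$ onto $\text{Mod}(\widetilde{\Sigma}_{\tau_0})$ is normally generated by exactly these attached relations. Accounting for this kernel requires combining the Mulazzani-Piergallini classification of minimal liftable twist powers, the Wajnryb-Wi\'sniowska-Wajnryb generating set $\mathcal{W}$, and the chosen finite presentation $\langle \{\widetilde{\sigma}_C\} \mid R\rangle$ of $\text{Mod}(\widetilde{\Sigma}_{\tau_0})$: every liftable braid factors as a product of liftable powers of twists $\sigma_C$ for $C\in\mathcal{W}$, Type~$2$ and Type~$3$ twist powers are annihilated, and the remaining Type~$1$ content is governed precisely by $R$. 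Once this presentation-theoretic bookkeeping is in place, the covering space argument above delivers both conclusions of Theorem~\ref{thm:cover}.
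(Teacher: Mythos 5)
Your proposal is correct and follows essentially the same route as the paper: the covering map $X_\mathcal{G}\to X_\mathcal{M}$ is built cell by cell exactly as in the paper's proof, and what you flag as the ``principal obstacle'' --- that the three families of $2$-cells normally generate the kernel of the edge-path-to-mapping-class map, via factoring a liftable braid into liftable powers of twists around the Wajnryb--Wi\'sniowska-Wajnryb arcs and annihilating the Type~$2$/$3$ powers --- is precisely the content of the paper's Proposition~\ref{prop:repr}. The only difference is packaging: you phrase the conclusion through the free, transitive monodromy/deck-transformation action on the fibre over $\tau_0$, whereas the paper deduces simple connectivity of $X_\mathcal{G}$ and $\pi_1(X_\mathcal{M},\tau_0)\cong\text{Mod}(\ws)$ directly from that proposition before exhibiting the covering.
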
 

As a first step, we establish the relevance of these CW-complexes.

\begin{proposition}\label{prop:repr} The CW-complexes $X_\mathcal{G}$ and $X_\mathcal{M}$ represent the groupoids $\mathcal{G}(\mu)$ and $\mathcal{M}(\mu)$, respectively. 
\end{proposition}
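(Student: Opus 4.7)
The plan is to build, for each groupoid, a functor from the fundamental groupoid of the corresponding CW-complex (with all $0$-cells as basepoints) to the target groupoid, and to show this functor is an isomorphism. I define $F_\mathcal{M}\colon \Pi_1(X_\mathcal{M}) \to \mathcal{M}(\mu)$ and $F_\mathcal{G}\colon \Pi_1(X_\mathcal{G}) \to \mathcal{G}(\mu)$ on $0$-cells by the tautological bijection with objects, and on a $1$-cell labeled $\sigma_i^{\pm}$ based at $\tau$ by sending it to $\Phi(\sigma_i^{\pm})$ or $\widetilde{\sigma}_i^{\pm}$, respectively. To see each $F_*$ descends past the $2$-cells, I check that every attached $2$-cell boundary maps to an identity relation in the target. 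The braid-relation $2$-cells collapse by Theorem~\ref{thm:homom2} and Proposition~\ref{prop:braidrel}. The Type 2 and Type 3 $2$-cells collapse because $\sigma_i^2$ (disjoint labels) and $\sigma_i^3$ (distinct non-disjoint labels) are liftable braids whose lifts are trivial, as in Example~\ref{ex:s3id}, and the corresponding arcslide compositions return a graphical object to its original isotopy class. The mapping class $2$-cells are relations in $\text{Mod}(\widetilde{\Sigma}_{\tau_0})$ by construction in $X_\mathcal{M}$, and in $X_\mathcal{G}$ they are also loops because any word of Dehn twists equal to the identity homeomorphism must preserve the isotopy class of every curve system, in particular $\widetilde{\Gamma}_{\tau_0}$.

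Surjectivity of each functor on morphisms is then essentially by construction: $\mathcal{G}(\mu)$ is generated by the $\widetilde{\sigma}_i^{\pm}$ by definition, and the lemma immediately preceding the proposition gives the analogous statement for $\mathcal{M}(\mu)$ with generators $\Phi(\sigma_i^{\pm})$. Combined with the observation that every edge-path in $X_*$ is a concatenation of $1$-cells, this shows every morphism in the target lies in the image.

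The principal obstacle is injectivity, which amounts to showing that the three families of $2$-cells suffice to encode every relation among the Artin generators. I plan to reduce an arbitrary loop to one based at $\tau_0$ by conjugating with any fixed edge-path; such a loop corresponds to a liftable braid $\beta$ whose image under $F_*$ is trivial. The strategy is a two-stage rewriting. First, using the braid-relation $2$-cells together with the Type 2 and Type 3 $2$-cells, I would reduce $\beta$ modulo homotopy to a word in liftable Type 1 twists $\sigma_C$ for $C \in \mathcal{W}$, invoking the Wajnryb-Wi\'sniowska-Wajnryb generation result and the fact that a twist $\sigma_C$ around an arbitrary arc is a braid-conjugate of an Artin generator. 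Second, the resulting word in the Dehn twists $\widetilde{\sigma}_C$ represents the identity of $\text{Mod}(\widetilde{\Sigma}_{\tau_0})$, hence is null-homotopic via the mapping class relations $R$. The same argument works verbatim in $X_\mathcal{G}$ since all three families of $2$-cells remain genuine loops there. The hardest step is the first reduction: one must interleave braid relations with conjugations to isolate individual twists and absorb minimal liftable powers of non-Type-1 arcs, relying essentially on the explicit generating set $\mathcal{W}$ and the type-determination lemma recalled just before the statement of the proposition.
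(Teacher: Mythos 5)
Your proposal is correct and follows essentially the same route as the paper's proof: both reduce a null-homotopic loop based at $\tau_0$ to a liftable braid word, use the braid-relation $2$-cells to rewrite it as a product of liftable powers of twists around arcs in $\mathcal{W}$, absorb the Type 2 and Type 3 factors (conjugates of $\sigma_j^2$ or $\sigma_j^3$) via the second family of $2$-cells, and fill the remaining word of Type 1 Dehn twists using the mapping class relations $R$. Your functorial framing and the explicit well-definedness and surjectivity checks are slightly more structured than the paper's presentation, but the substance of the argument, including the step you flag as hardest, is the same.
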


 Proposition~\ref{prop:repr} has the following immediate corollary:

\begin{corollary}\label{cor} Then $\pi_1(X_\mathcal{M}, \tau_0)\cong \text{Mod}(\ws_{\tau_0})$.  
\end{corollary}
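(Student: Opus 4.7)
The plan is to deduce the corollary directly from Proposition~\ref{prop:repr}: once $X_\mathcal{M}$ is known to represent the groupoid $\mathcal{M}(\mu)$, the statement becomes the standard identification of the fundamental group of a 2-complex representing a groupoid, based at a vertex $v$, with the vertex (isotropy) group of the groupoid at $v$.

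First I would verify that the component of $X_\mathcal{M}$ containing $\tau_0$ accounts for all of $\mathcal{T}(\mu)$: every $\tau\in\mathcal{T}(\mu)$ is joined to $\tau_0$ by a 1-cell path, since the Hurwitz action of $B_n$ is transitive on $\mathcal{T}(\mu)$ and each Hurwitz transition is realised by an edge of the form $\Phi(\sigma_i)^{\pm 1}$. Next, each based loop at $\tau_0$ in the 1-skeleton corresponds to a word in the generators $\Phi(\sigma_i)^{\pm 1}$ whose underlying coloured braid $\beta$ satisfies $\beta\cdot \tau_0=\tau_0$; by Theorem~\ref{thm:homom2} such a word defines a morphism in $\mathrm{Hom}_{\mathcal{M}(\mu)}(\ws_{\tau_0},\ws_{\tau_0})$, and every such morphism arises in this way by the surjectivity of the lifting map established in the lemma preceding Proposition~\ref{prop:repr}.

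Then I would invoke Proposition~\ref{prop:repr} to see that two loops at $\tau_0$ in $X^1_\mathcal{M}$ are homotopic across the attached 2-cells precisely when they represent equal morphisms in $\mathcal{M}(\mu)$. Consequently
\[
\pi_1(X_\mathcal{M},\tau_0)\;\cong\;\mathrm{Hom}_{\mathcal{M}(\mu)}(\ws_{\tau_0},\ws_{\tau_0}),
\]
and by Definition~\ref{def:mcgoid} the right-hand side is exactly the group of isotopy classes rel $\partial\ws_{\tau_0}$ of boundary-preserving self-homeomorphisms of $\ws_{\tau_0}$, i.e.\ $\mathrm{Mod}(\ws_{\tau_0})$.

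The main work has effectively been folded into Proposition~\ref{prop:repr}; what remains is the standard but slightly formal passage from ``$X_\mathcal{M}$ is a presentation 2-complex of a groupoid'' to ``$\pi_1$ at a vertex equals the vertex group there''. I expect this to be the only mildly fiddly point, and it can be dispatched either by Seifert--van Kampen on a spanning tree of the 1-skeleton (collapsing the tree reduces $X_\mathcal{M}$ to the presentation 2-complex of the vertex group $\mathrm{Mod}(\ws_{\tau_0})$) or by appealing directly to the well-known correspondence between presentations of a groupoid and presentations of its vertex groups. Neither route requires input specific to branched covers beyond Proposition~\ref{prop:repr} itself.
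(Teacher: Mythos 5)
Your proposal is correct and matches the paper's treatment: the paper states the corollary as an immediate consequence of Proposition~\ref{prop:repr}, the point being exactly the standard identification of $\pi_1$ of a groupoid's presentation complex at a vertex with the vertex group, which by Definition~\ref{def:mcgoid} is $\mathrm{Mod}(\ws_{\tau_0})$. Your additional checks (transitivity of the Hurwitz action on $\mathcal{T}(\mu)$, surjectivity of the lifting map, and the spanning-tree reduction) are just the details the paper leaves implicit in the word ``immediate.''
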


\begin{proof}[Proof of Proposition~\ref{prop:repr}] The $2$-cells in $X_*$ are associated to relations of three types, and we will show that these generate all the relations among morphisms in $\mathcal{G}$ and $\mathcal{M}$, as first defined  in Section~\ref{sec:groupoids}. 

Two morphisms in $\mathcal{G}(\mu)$ acting on the same graphical object are equivalent if and only if they produce the same graphical object.   Thus, every loop in $X_\mathcal{G}$ should be contractible.  Pick a loop of edges based at $\tau_0$ and consider the associated liftable braid word.  Because a 2-cell is attached for each braid group relation, this path is homotopic in $X_\mathcal{G}$ to a path factoring this braid as a product of liftable powers of WW generators.  Each factor that is a liftable power of a twist around at Type 2 or Type 3 arc $C$ corresponds to a path of the form $\beta^{-1}\sigma_j^i\beta$, where $\beta$ is a braid whose action on the marked disc takes $C$ to an elementary arc between the branch values $p_j$ and $p_{j+1}$.  A $2$-cell is attached to each $\sigma_j^i$ for $i=2,3$, so the sub-path corresponding to each of these factors is homotopic to the constant path at the basepoint.  We are left with a concatenation of loops each corresponding to a WW generator of the  mapping class group.  Since original path was a loop in $X_\mathcal{G}$, the composition of these Dehn twists represents the identity mapping class, and hence, some $2$-cells of the third type tile the loop, as desired.

An identical argument establishes the contractibilty of loops respresenting the identity mapping class in $X_\mathcal{M}$. In contrast to $X_\mathcal{G}$, there are loops in the this complex which are not contractible, but these correspond to non-trivial mapping classes.
\end{proof}

Finally,  we complete the proof of Theorem~\ref{thm:cover}.

\begin{proof}
The map $L: \mathcal{G}(\mu)\rightarrow \mathcal{T}$ that sends each graphical object to a set of labels on the marked disc can be interpreted as a map   $\mathcal{L}:X_\mathcal{G}\rightarrow X_\mathcal{M}$.   

Any $0$-cell of $X_\mathcal{G}$ is a graphical object $(\ws_{\tau}, \widetilde{\Gamma})$; define $\mathcal{L}(\ws_{\tau}, \widetilde{\Gamma})$ to be the vertex in $X_\mathcal{M}$ corresponding to $\ws_{L(\widetilde{\Gamma})}$.  The fibre over a $0$-cell in $X_\mathcal{M}$ labeled with $\tau$ is then the infinite set of $0$-cells associated to graphical objects $O$ such that $L(O)=\tau$.  

Extending $\mathcal{L}$ to edges is straightforward: send any edge labeled by $\widetilde{\sigma_i}$ in $X_{\mathcal{G}}$ to the edge $\Phi(\sigma_i)$ in $X_{\mathcal{M}}$.

Finally, recall that 2-cells were added to the two complexes using formally  identical recipes.  For each $2$-cell added to $X_\mathcal{M}$, the $\mathcal{L}$-preimage of the boundary is an infinite family of loops in $X_\mathcal{G}$ with a $2$-cell attached to each.  Define $\mathcal{L}$ on a $2$-cell in $X_\mathcal{G}$ to be the $2$-cell in $X_\mathcal{M}$ attached to the $\mathcal{L}$-image of its boundary.  The covering claim follows from this extension.

\end{proof}

\begin{example}\label{ex:complex}

This example gives a more explicit description of the complexes $X_\mathcal{G}$ and $X_\mathcal{M}$ for the equivalence class of branched covers of the disc associated to $\tau_0=(12), (23), (23), (23)$.  As seen in Figure~\ref{fig:complex3}, the branched cover is a one-holed torus.  The simplicity of this example is reflected in the size of the generating set $\mathcal{W}$; the only  twists around Type 1 curves in this set are $\sigma_2$ and $\sigma_3$.  These lift to the Dehn twists around the dotted and solid curves, respectively.

\begin{figure}[h]
\begin{center}
\includegraphics[scale=.6]{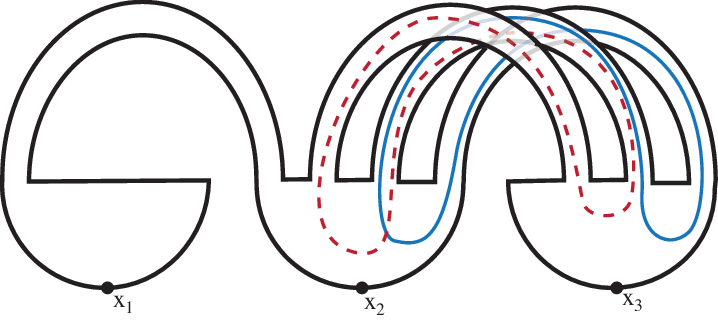}
\caption{ The mapping class group of the one-holed torus $\ws_{\tau_0}$ is generated by Dehn twists around two indicated curves.     } 
\label{fig:complex3}
\end{center}
\end{figure}

Figure~\ref{fig:complex2} indicates the kinds of $2$-cells attached in $X_\mathcal{M}$.  Since the the generator $\sigma_1$ exchanges branch values  with distinct but non-disjoint $\tau_0$ labels, a $2$-cell is attached along the path labeled by $\sigma_1^3$. A $2$-cell is attached along the loop labeled by the braid relation $\sigma_1\sigma_2\sigma_1\sigma_2^{-1}\sigma_1^{-1}\sigma_2^{-1}$.  In fact, the same reasoning dictates attaching a $2$-cell along the loop labeled by $\sigma_2\sigma_3\sigma_2\sigma_3^{-1}\sigma_2^{-1}\sigma_3^{-1}$, but we may alternatively view this as an example of a mapping class group relation, since Figure~\ref{fig:complex3} shows that $\Phi(\sigma_2)$ and $\Phi(\sigma_3)$ are Dehn twists along curves intersecting once in the cover.

\begin{figure}[h]
\begin{center}
\includegraphics[scale=.45]{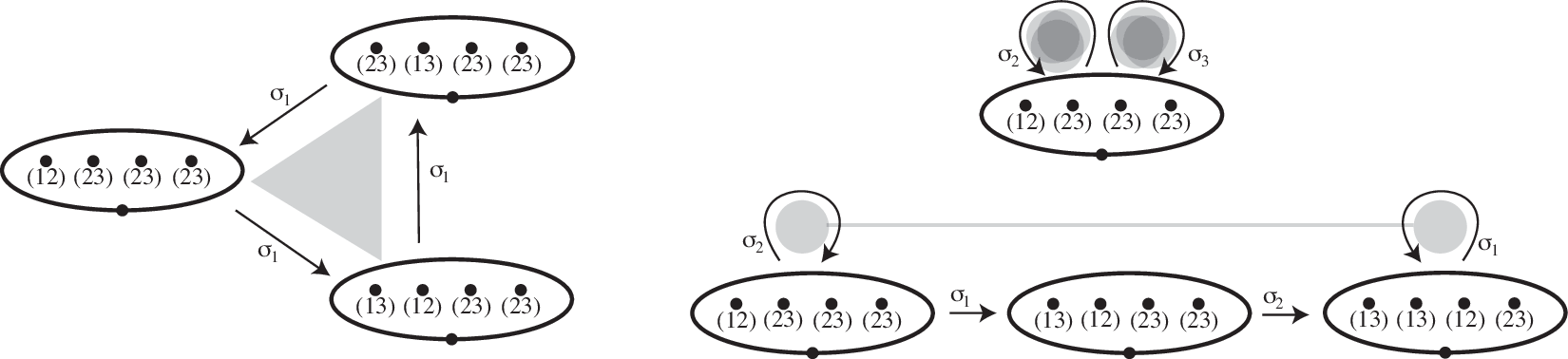}
\caption{ Left: attach a 3-cell along the edge path labeled by $\sigma_1^3$ in $X^1_\mathcal{M}$.  Right, bottom: attach a $2$-cell along $\sigma_1\sigma_2\sigma_1\sigma_2^{-1}\sigma_1^{-1}\sigma_2^{-1}$.  Right, top: attach a $2$-cell along $\sigma_2\sigma_3\sigma_2\sigma_3^{-1}\sigma_2^{-1}\sigma_3^{-1}$.   } 
\label{fig:complex2}
\end{center}
\end{figure}

\end{example}

We end with question.  The $2$-cells attached to $X^1_\mathcal{G}$ and $X^1_\mathcal{M}$ allow us to prove Theorem~\ref{thm:cover}, but resorting to relations  in the mapping class group is somewhat unsatisfying, as this backs away from the groupoid structure driving the rest of the paper.  It seems likely that there are other ways to characterise $2$-cells that produce representative CW-complexes.  In particular, we would like to understand which sequences of arcslides correspond to isotopic homeomorphisms without reference to the mapping class group of a fixed covering surface.

\begin{question} How else can $2$-cells be attached to $X^1_\mathcal{G}$ and $X^1_\mathcal{M}$ in order to build CW-complexes satisfying Theorem~\ref{thm:cover}?
\end{question}

\bibliographystyle{alpha}
\bibliography{lift}

\end{document}